\documentclass[11pt,reqno]{amsart}

\usepackage[T1]{fontenc}
\usepackage[utf8]{inputenc}
\usepackage{lmodern}
\usepackage{microtype}
\usepackage{amsmath,amssymb,amsthm,mathtools}
\usepackage{mathrsfs}
\usepackage{bm}
\usepackage{enumitem}
\usepackage{booktabs}
\usepackage{xcolor}
\usepackage{hyperref}
\usepackage[nameinlink,capitalize,noabbrev]{cleveref}
\usepackage[a4paper,margin=1.15in]{geometry}

\hypersetup{
  colorlinks=true,
  linkcolor=blue!55!black,
  citecolor=green!45!black,
  urlcolor=blue!60!black,
  pdftitle={Biharmonic Conformal Surfaces and Dirac Factorization I: Exact Spinorial Encoding and Scalar--Chiral Rigidity},
  pdfauthor={Dipesh Bhandari}
}

\newtheorem{theorem}{Theorem}[section]
\newtheorem{proposition}[theorem]{Proposition}
\newtheorem{lemma}[theorem]{Lemma}
\newtheorem{corollary}[theorem]{Corollary}
\newtheorem{remark}[theorem]{Remark}
\newtheorem{definition}[theorem]{Definition}

\newcommand{\R}{\mathbb{R}}
\newcommand{\C}{\mathbb{C}}
\newcommand{\Sph}{\mathbb{S}}
\newcommand{\Hyp}{\mathbb{H}}

\newcommand{\Ric}{\operatorname{Ric}}
\newcommand{\grad}{\operatorname{grad}}
\newcommand{\diver}{\operatorname{div}}
\newcommand{\tr}{\operatorname{tr}}
\newcommand{\Id}{\operatorname{Id}}
\newcommand{\cl}{\mathbin{\cdot}}
\newcommand{\Bop}{\mathscr{B}_{c}}
\newcommand{\Qop}{\mathscr{Q}_{\eta}}

\newcommand{\vol}{\omega}
\newcommand{\dd}{\,\mathrm{d}}
\newcommand{\Disc}{\mathfrak{D}}
\newcommand{\doilink}[1]{\href{https://doi.org/#1}{\nolinkurl{doi:#1}}}
\newcommand{\arxivlink}[1]{\href{https://arxiv.org/abs/#1}{\nolinkurl{arXiv:#1}}}
\newcommand{\introheading}[1]{%
  \par\addvspace{1.1\baselineskip}%
  \noindent\textbf{#1.}\par
  \nobreak\smallskip
}

\title[BC surfaces and Dirac factorization I]{Biharmonic Conformal Surfaces and Dirac Factorization I: Exact Spinorial Encoding and Scalar--Chiral Rigidity}

\author{Dipesh Bhandari}
\address{Department of Physics, Southern Methodist University, Dallas, Texas, USA}
\email{dbhandari@smu.edu}

\date{July 29, 2026}

\subjclass[2020]{53C27, 58E20 (Primary); 53C40, 53A30, 81R25 (Secondary)}
\keywords{biharmonic conformal immersion, spin geometry, Dirac factorization, Killing spinor, scalar--chiral operator, Dirac discriminant, local rigidity, isoparametric surface}

\begin{document}

\begin{abstract}
For maps from surfaces, harmonicity is conformally invariant and a conformal immersion is harmonic precisely when its image is minimal.  Biharmonicity is a fourth-order extension of this theory, but it is not conformally invariant.  A nonminimal immersion may therefore become biharmonic after a suitable change of the domain metric.  In a three-dimensional space form, Ou's formulation reduces this problem to two coupled equations for the weighted mean curvature $U=\lambda^2H$: one scalar equation and one tangential equation.

We ask whether these two equations can be organized as a single Dirac-type equation and what geometry is compatible with a first-order factorization.  Restricting an ambient Killing spinor to the surface, we construct a natural Laplace-type operator $\mathscr B_c$ and prove that $\mathscr B_c(U\psi)=0$ is exactly equivalent to Ou's system.  We then classify every factorization of $\mathscr B_c$ in the monic scalar--chiral class $(D+a+b\omega)(D+p+q\omega)$.  In nonzero curvature, every such factorization is automatically mean-curvature-normalized and exists locally if and only if the surface has locally constant principal curvatures.  The same rigidity holds for the normalized Euclidean branch; the remaining Euclidean factors form an exceptional holomorphic--antiholomorphic family characterized, away from planar points, by harmonicity of $\log(|A|^2-H^2)$.

The rigidity mechanism is governed by a Dirac discriminant $9c-4|A|^2$.  Its hypothetical non-CMC real branch reduces to a spherical Gauss--Codazzi system whose exact Frobenius torsion is strictly negative.  Model examples finally show that factorization of the geometric operator is distinct from the existence of a positive conformal mode.  Thus the scalar--chiral channel is complete but too rigid to generate new non-CMC examples, providing a precise baseline for broader Clifford-valued constructions.
\end{abstract}

\maketitle

\section{Introduction}

\introheading{From harmonic maps to a fourth-order surface problem}
A smooth map $\phi:(M,g)\to(N,h)$ is harmonic when it is a critical point of the Dirichlet energy.  Harmonic maps include geodesics, minimal immersions, and nonlinear sigma-model fields, and they form one of the basic meeting points of differential geometry, nonlinear analysis, and mathematical physics.  For an isometric immersion of a surface, the tension field is $2H\xi$; harmonicity is therefore exactly the minimal-surface condition $H=0$.

Biharmonic maps are critical points of the bienergy, the $L^2$-norm of the tension field.  Their Euler--Lagrange equation is fourth order in the map and contains harmonic maps as the trivial branch.  A biharmonic map that is not harmonic is called \emph{proper biharmonic}.  The theory asks a natural rigidity question: how much of the geometric structure of harmonic maps survives after passing from the energy to the bienergy?  Jiang established the variational framework \cite{Jiang}, and the subsequent submanifold theory has developed around classification, nonexistence, stability, and the search for proper examples; see \cite{MontaldoOniciucSurvey,FetcuOniciucSurvey,MontaldoInvitation}.

Surfaces make the conformal version of this question especially subtle.  Harmonicity of maps from a two-dimensional domain is unchanged by a conformal rescaling of the domain metric, but biharmonicity is not.  Consequently, a fixed nonminimal surface immersion may fail to be biharmonic for its induced metric and yet become biharmonic when the domain is equipped with a conformally related metric.  This is the setting of a \emph{biharmonic conformal immersion} (BCI).

Let
\[
\phi:(M^2,\bar g)\longrightarrow N^3(c),
\qquad
\phi^*h=g=\lambda^2\bar g,
\]
be a conformal immersion into a three-dimensional space form.  Write $A$ for the shape operator, $H=\tfrac12\operatorname{tr}A$ for the normalized mean curvature, and
\[
U:=\lambda^2H
\]
for the weighted mean curvature.  Ou's formulation of the conformal biharmonic equation is the coupled system
\begin{equation}\label{eq:Ou-system-intro}
\Delta U-U(|A|^2-2c)=0,
\qquad
A(\grad U)+U\grad H=0.
\end{equation}
The first equation resembles a Schr\"odinger equation with geometric potential $|A|^2-2c$.  The second is not a secondary constraint: it couples the direction of variation of $U$ to the principal geometry of the surface.  The cylinder illustrates one possibility, because $U$ may vary along $\ker A$; the proper small sphere illustrates another, because curvature balance forces $U$ to be constant.  A useful reformulation must retain both mechanisms at once.

\introheading{The problem addressed here}
The two equations in \eqref{eq:Ou-system-intro} look different---one scalar and one vectorial---but surface geometry already carries a natural first-order operator that mixes scalar and tangent information: the Dirac operator.  Moreover, an immersed surface in a three-dimensional space form inherits, locally, the restriction of an ambient parallel or Killing spinor.  Classical work of Friedrich, Morel, and others shows that such restricted spinors encode the first and second fundamental forms \cite{Friedrich,Morel,RothHomogeneous,BayardLawnRoth,ContiSegnanDalmasso}.

This suggests two increasingly strong questions.
\begin{enumerate}[label=\textup{(Q\arabic*)},leftmargin=2.7em]
\item Can Ou's scalar and tangential equations be recovered as the Clifford components of one natural spinorial equation?
\item If so, can the resulting second-order operator be factored into two first-order Dirac-type operators, and what does such a factorization say about the surface?
\end{enumerate}
The first question is an encoding problem and should hold for every BCI.  The second is a geometric compatibility problem and need not hold at all.  Keeping this distinction visible is essential: a constrained zero mode of a second-order operator does not by itself imply a first-order factorization.

\introheading{Main result}
The paper gives a complete answer in the scalar--chiral coefficient class, meaning that the zeroth-order terms of the factors are generated by the identity and the intrinsic area form $\omega$.

\begin{theorem}[Main theorem, informal form]\label{thm:main-intro}
Let $\phi:M^2\to N^3(c)$ be an oriented immersed surface, let $\psi$ be a locally restricted ambient Killing spinor, and let $U=\lambda^2H$.
\begin{enumerate}[label=\textup{(\roman*)},leftmargin=2.2em]
\item There is a natural Laplace-type operator
\[
\mathscr B_c
=D^2+2\eta\omega D-2\eta H\omega+|A|^2-H^2-2c,
\qquad 4\eta^2=c,
\]
such that $\mathscr B_c(U\psi)=0$ is equivalent to Ou's two equations and hence to biharmonicity of the conformal immersion.
\item Every monic scalar--chiral factorization
\[
\mathscr B_c=(D+a+b\omega)(D+p+q\omega)
\]
satisfies an explicit coefficient normal form.  When $c\neq0$, the ambient curvature forces $p=-H$, so every factorization belongs to the mean-curvature-normalized branch.
\item In nonzero curvature, such a factorization exists locally if and only if the principal curvatures are locally constant.  In Euclidean space, the same conclusion holds for the mean-curvature-normalized branch.  The remaining Euclidean factors form a separate holomorphic--antiholomorphic family.
\end{enumerate}
\end{theorem}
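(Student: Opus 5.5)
The plan is to work in a local orthonormal principal frame $e_1,e_2$ with $Ae_i=k_ie_i$ and to use the Gauss formula for the restricted Killing spinor,
\[
\nabla_X\psi=-\tfrac12 A(X)\cl\xi\cl\psi+\eta X\cl\psi .
\]
Once rewritten in intrinsic Clifford terms, this gives $\nabla_X\psi=\tfrac12 A(X)\cl\omega\cl\psi+\eta X\cl\psi$ up to sign conventions. From it we get $D\psi=H\omega\psi-2\eta\psi$, or the analogous identity with the paper's normalization. For a function $f$ we have $D(f\psi)=\grad f\cl\psi+fD\psi$, and applying $D$ once more gives
\[
D^2(f\psi)=(\Delta f)\psi+\text{(first-order terms in }\grad f)+f D^2\psi .
\]
The tangential terms contain $\sum_i e_i\cl\nabla_{e_i}\psi$ contracted against $\grad f$. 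They produce $A(\grad f)\cl\omega\cl\psi$ together with terms $\grad f\cl(H\omega+\eta)\psi$. The term $2\eta\omega D$ in $\mathscr B_c$ is designed to cancel the residual $\eta$-dependent gradient terms. Similarly, the term $-2\eta H\omega$ together with $|A|^2-H^2-2c$ is designed to reduce $fD^2\psi$ plus the potential to $(\Delta f-f(|A|^2-2c))\psi$, using $D^2\psi$ computed from $\grad H$ and $H^2$.

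Setting $f=U$, we decompose $\mathscr B_c(U\psi)$ in the basis $\{\psi,\omega\psi,e_1\psi,e_2\psi\}$. This basis is pointwise independent over $\R$ because $\psi$ is nonvanishing. The scalar part gives $\Delta U-U(|A|^2-2c)$. The vector part gives $A(\grad U)+U\grad H$, with the $U\grad H$ coming from differentiating $H\omega\psi$ in $D^2(U\psi)$. This proves (i). The main bookkeeping risk here is that $\Delta$ in Ou's system refers to $\bar g$ or $g$. I will check that the factor $\lambda^2$ in $U$ is exactly what converts one into the other, and otherwise rely on the earlier derivation of Ou's system as stated.

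For (ii), I expand
\[
(D+a+b\omega)(D+p+q\omega)=D^2+(a+p)D+b\omega D+q D\omega+\grad p\cl+\grad q\cl\omega+(a+b\omega)(p+q\omega).
\]
I use the anticommutation $D\omega=-\omega D$ on spinors (in dimension two, $\omega$ anticommutes with vectors) and $\omega^2=-1$. Matching with $\mathscr B_c$ then proceeds in three steps. The first-order scalar part gives $a+p=0$. The $\omega D$ part gives $b-q=2\eta$. The remaining first-order terms $\grad p+\grad q\,\omega$ must vanish as operators, which forces $p$ and $q$ to be constant. Here I must be careful: $\mathscr B_c$ is a fixed operator on all spinors, so only equality of operators counts, and the tangential Clifford terms $\grad p\cl\phi$ must vanish identically. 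The zeroth order part then gives
\[
ap-bq+(aq+bp)\omega=-2\eta H\omega+|A|^2-H^2-2c .
\]
With $a=-p$ and $b=q+2\eta$, these become $-p^2-q(q+2\eta)=|A|^2-H^2-2c$ and $-pq+p(q+2\eta)=2\eta p=-2\eta H$. Hence $p=-H$ whenever $\eta\neq0$, which proves the normal form in (ii).

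However, taken literally this gives $H$ constant and then $|A|^2$ constant, i.e.\ constant principal curvatures, which is (iii) in the $c\neq0$ case. The "Euclidean exceptional family" indicates that the paper allows coefficients that are not constant and absorbs $\grad p\cl$ via a Clifford identity: $\grad p\cl\phi$ can be rewritten using $\omega$ and the complex structure, so that $\grad p+\omega\grad q$ vanishes exactly when $p+iq$ is (anti)holomorphic. That is, the first-order condition is a Cauchy–Riemann system rather than constancy. I expect this to be the main obstacle: correctly handling $\grad p\cl+\grad q\cl\omega$, which in two dimensions equals $(\grad p+J\grad q)\cl$, so vanishing means $p-iq$ is holomorphic. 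Then for $c\neq0$ we have $p=-H$ and the zeroth-order equation fixes $q$ through a quadratic, $q^2+2\eta q=2c+H^2-H^2\dots$, so $q$ is a function of $|A|^2$ and $H$. The Cauchy–Riemann equations linking $H$ and $q(|A|^2,H)$, combined with Codazzi, should force $\grad H=0$ and hence $\grad|A|^2=0$. For $c=0$ with $p=-H$ the same argument applies. If $p\neq-H$, then $a=-p$, $b=q$, the modulus relation is $p^2+q^2=H^2-|A|^2$ up to sign, and holomorphy of $p\pm iq$ makes $\log|p\pm iq|^2=\log(|A|^2-H^2)$ harmonic. This is the stated characterization away from umbilic (planar) points.
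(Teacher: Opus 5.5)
Your treatment of (i) and (ii) follows the paper's route. For (i) you compute $D^2(U\psi)$, cancel the chiral drift with $2\eta\vol D$ and the leftover $HU\vol\cl\psi$ with $-2\eta H\vol$, and separate the $\psi$- and $X\cl\psi$-components using $\psi\neq0$. For (ii) you match the $D$, $\vol D$, Clifford-vector, $\Id$ and $\vol$ coefficients; after your self-correction, the vector term correctly gives the Cauchy--Riemann condition $\grad p=J\grad q$ rather than constancy. One caution: you must use the paper's identification $D\psi=H\psi+2\eta\vol\cl\psi$. With the convention you display, the drift comes out proportional to $\grad U\cl\psi$, and the stated $\mathscr B_c$ would not cancel it.

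The genuine gap is (iii). Your sentence that the Cauchy--Riemann equations ``combined with Codazzi, should force $\grad H=0$'' is the entire difficulty of the theorem, and you give no mechanism for it. Coefficient matching only says that $H$ is harmonic and that $|A|^2$ is a fixed quadratic function of a harmonic conjugate of $H$. Whether Gauss--Codazzi can accommodate this is a genuinely overdetermined question; the paper itself flags a short intrinsic proof as open.

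Two ingredients are missing.

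\textbf{The shift $s=\beta+\eta$ and a reality argument.} The shift turns the Riccati relation into $s^2=\tfrac94c-|A|^2$ with $\grad s=J\grad H$. Since $\grad s$ is real, $\Im s$ is constant, and reality of $s^2$ forces $\Re s\,\Im s=0$. Hence a non-CMC branch must have $s$ real. That is impossible for $c<0$ and forces $A=0$ for $c=0$. This, not ``the same argument'', is what disposes of $\Hyp^3$ and the normalized Euclidean branch.

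\textbf{An obstruction for $c>0$.} Here the real branch is not excluded by anything this cheap. The paper takes $z=H+is$ as a conformal coordinate and writes Gauss--Codazzi with $|Q|^2=\tfrac18e^{4u}R$, where $R=\tfrac94c-2H^2-s^2>0$. It then eliminates the Hopf phase and prolongs to a rank-two distribution on $(x,y,u,\theta,v)$. Finally it proves that the Frobenius torsion $\mathcal C$ is a quadratic in $v$ that is strictly negative for every $\rho=e^{2u}>0$, using exact Bernstein certificates. Without this, or an equivalent incompatibility proof, the forward direction of (iii) for $c>0$ is unproved.

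Two smaller points:
\begin{enumerate}
\item The converse should be stated: constant principal curvatures give a constant root $\beta=-\eta\pm\tfrac12\sqrt{9c-4|A|^2}$.
\item In the exceptional Euclidean family, $p,q$ are complex, so $p^2+q^2=rt$ with $r=p-iq$, $t=p+iq$ is not $|r|^2$. You first need that $rt=-(|A|^2-H^2)$ is real and nonzero. Then $r/\overline t$ is a real-valued holomorphic function, hence a constant $\kappa<0$, and only then is $\log(|A|^2-H^2)=\log|r|^2-\log(-\kappa)$ harmonic. For the converse, take $t=-\overline r$ with $|r|^2=|A|^2-H^2$.
\end{enumerate}
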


The heart of the rigidity argument is easy to state even though its final verification is algebraic.  In the normalized branch, the shifted chiral coefficient $s=\beta+\eta$ satisfies
\begin{equation}\label{eq:intro-discriminant}
\grad s=J\grad H,
\qquad
4s^2=9c-4|A|^2.
\end{equation}
Thus $H+is$ is a holomorphic datum and the quantity
\[
\mathfrak D:=9c-4|A|^2
\]
acts as a geometric discriminant.  Equations \eqref{eq:intro-discriminant} immediately exclude a real non-CMC branch in Euclidean and hyperbolic space.  The only remaining possibility lies in the sphere.  There the Gauss--Codazzi equations reduce to a finite-type first-order system, and its Frobenius torsion has a strict negative sign.  The putative non-CMC branch therefore cannot exist.

\introheading{Why this result is useful}
The theorem is not merely a repackaging of Ou's equations.  It clarifies what a Dirac method can and cannot do in this problem.

First, it combines the normal and tangential biharmonic equations into one elliptic operator without losing the scalar constraint $U=\lambda^2H$.  This creates a common language for geometric PDE, spinorial surface representations, and spectral methods.  Second, the factorization calculation converts a flexible-looking fourth-order variational problem into a sharply constrained first-order compatibility system.  The resulting no-go theorem explains why the simplest scalar--chiral factorization cannot produce genuinely new non-CMC examples.  Third, this failure is constructive information: any successful example-generating mechanism must use a larger coefficient bundle, a matrix-valued factor, or another coupling that escapes the scalar line.  Finally, the model geometries distinguish three notions that are often conflated: factorization of the geometric operator, existence of a zero mode, and positivity of the conformal factor.

These features suggest applications beyond the immediate classification statement.  The operator may be used to organize spectral and stability questions for known biharmonic surfaces; the discriminant provides a first-order rigidity diagnostic; and the coefficient-matching method offers a template for other weighted geometric Euler--Lagrange systems whose normal and tangential equations might be assembled into Clifford components.

\introheading{Relation to existing work and scope}
Ou derived the conformal and weighted surface equations used here \cite{OuConformal,OuThree,OuF,OuFII,OuHypersurfaces}; related CMC and umbilical classifications were developed with Wang \cite{OuWang}.  Recent work has refined the classification of $f$-biharmonic surfaces and conformal hypersurfaces \cite{WangQin,WangQinChen,CherifOu}.  In the isometric spherical setting, the standard proper examples and several major rigidity results appear in \cite{CaddeoS3,CaddeoSpheres,BalmusMontaldoOniciuc,Balmus4D,FuThree,GuanLiVrancken,AndronicFuOniciuc}.  The present paper addresses a different question: it classifies a specified first-order operator channel associated with Ou's equations.  It neither assumes nor proves that every BCI has a factorizing operator, and it does not resolve the broader classification conjectures for biharmonic hypersurfaces.

The terminology also deserves care.  A biharmonic conformal immersion here is an ordinary biharmonic map after conformally changing the domain metric, as in Ou's work.  It is different from a conformal-biharmonic or $c$-biharmonic map, which is critical for a curvature-corrected conformal bienergy \cite{BrandingNistorOniciuc}.

On the spinorial side, the restricted Killing-spinor identities belong to the established immersion framework \cite{Friedrich,Morel,RothHomogeneous,BayardLawnRoth,ContiSegnanDalmasso}.  The new content is the exact recombination of the weighted bitension equations, the complete coefficient normal form in the monic scalar--chiral class, the exceptional Euclidean branch, and the finite-type local rigidity theorem.

\introheading{Guide to the argument}
The logical structure is summarized below.
\begin{center}
\small
\begin{tabular}{p{0.27\textwidth}p{0.43\textwidth}p{0.18\textwidth}}
\toprule
Question & Answer & Location \\
\midrule
What are the geometric equations? & Harmonic and biharmonic map background, followed by Ou's weighted system. & \Cref{sec:Ou} \\
How do the two equations become one? & Restrict a Killing spinor, compute $D^2(U\psi)$, and cancel the curvature-induced chiral drift. & \Cref{sec:restricted,sec:main} \\
What first-order structure is always available? & A doubled elliptic Dirac-type prolongation. & \Cref{sec:first-order} \\
When does an undoubled factorization exist? & Exact coefficient matching gives a Riccati--Cauchy--Riemann system. & \Cref{sec:factorization} \\
What geometry satisfies that system? & The discriminant reduces the problem to one spherical branch, which a Frobenius obstruction eliminates. & \Cref{sec:discriminant,sec:local-reduction} \\
How does this relate to actual conformal factors? & Cylinders, small spheres, and round spheres separate factorization from positive constrained modes. & \Cref{sec:examples} \\
\bottomrule
\end{tabular}
\end{center}

The results are local unless a global hypothesis is stated.  They concern Riemannian space forms; pseudo-Riemannian analogues require separate choices of Clifford module, normal sign, and real structure and cannot be obtained by a formal substitution $c\mapsto-c$.

\section{From harmonic maps to Ou's weighted surface equations}\label{sec:Ou}

This section supplies the map-theoretic background needed for the rest of the paper.  It also explains why the single field $U=\lambda^2H$ is the correct unknown for a conformal surface immersion.

Let $\phi:(M,g)\to(N,h)$ be a smooth map.  Its Dirichlet energy is
\begin{equation}\label{eq:energy}
E(\phi)=\frac12\int_M |\dd\phi|^2\,\dd v_g,
\end{equation}
and its Euler--Lagrange field is the tension field
\begin{equation}\label{eq:tension}
\tau(\phi)=\operatorname{tr}_g\nabla\dd\phi.
\end{equation}
The map is harmonic when $\tau(\phi)=0$.  If $\phi:M^2\to N^3$ is an isometric immersion with unit normal $\xi$, then
\begin{equation}\label{eq:tension-immersion}
\tau(\phi)=2H\xi.
\end{equation}
Thus harmonic surface immersions are precisely minimal surfaces.

The bienergy is
\begin{equation}\label{eq:bienergy}
E_2(\phi)=\frac12\int_M |\tau(\phi)|^2\,\dd v_g.
\end{equation}
Its critical points are biharmonic maps.  Every harmonic map is biharmonic, while a biharmonic map with nonzero tension is called proper biharmonic.  For immersions, the bitension field splits into normal and tangential parts.  This splitting is the source of both the analytic difficulty and the geometric content of the theory: the normal equation controls a curvature potential, whereas the tangential equation couples the mean-curvature gradient to the shape operator.

Now suppose
\begin{equation}\label{eq:conformal-immersion}
\phi:(M^2,\bar g)\longrightarrow N^3(c),
\qquad
\phi^*h=g=\lambda^2\bar g,
\end{equation}
with $\lambda>0$.  We regard the same map as an isometric immersion from $(M,g)$ and set
\begin{equation}\label{eq:U}
U=\lambda^2H.
\end{equation}
In dimension two, biharmonicity of the conformal immersion is equivalent to the $f$-biharmonic equation for the induced isometric immersion with weight $f=\lambda^2$; see \cite{OuConformal,OuThree,OuF,OuFII}.  This is why the conformal factor and the mean curvature occur through their product $U$.

\begin{theorem}[Ou]\label{thm:Ou}
The conformal immersion \eqref{eq:conformal-immersion} is biharmonic if and only if
\begin{align}
\Delta U-U(|A|^2-2c)&=0,\label{eq:Ou-normal}\\
A(\grad U)+U\grad H&=0.\label{eq:Ou-tangent}
\end{align}
All differential operators are computed with the induced metric $g$.
\end{theorem}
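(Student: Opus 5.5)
The plan is to derive the system from the conformal transformation law for the tension field together with the standard bitension formula for a hypersurface in a space form. Regard $\phi$ as a map from $(M^2,\bar g)$ with $\bar g=\lambda^{-2}g$. In dimension two the tension field rescales by the conformal factor: $\bar\tau(\phi)=\lambda^2\tau(\phi)=2\lambda^2H\xi=2U\xi$, because $\operatorname{tr}_{\bar g}=\lambda^2\operatorname{tr}_g$ and the Christoffel correction term carries the factor $(m-2)=0$. Next I will compute the bitension field
\[
\bar\tau_2(\phi)=-\operatorname{tr}_{\bar g}\bigl(\nabla^\phi\nabla^\phi-\nabla^\phi_{\bar\nabla}\bigr)\bar\tau-\operatorname{tr}_{\bar g}R^N(\dd\phi,\bar\tau)\dd\phi .
\]
In dimension two the rough Laplacian on $\phi^{-1}TN$ is also conformally covariant, $\bar\Delta=\lambda^2\Delta$. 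This gives $\bar\tau_2=\lambda^2\,\tau_2^{g}$-type$(2U\xi)$, meaning the isometric bitension operator applied to the section $2U\xi$ instead of $2H\xi$. Equivalently, this is the $f$-biharmonic equation with $f=\lambda^2$, as the paper notes just before the theorem.

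The main step is to expand $\operatorname{tr}_g(\nabla^2)(U\xi)+\operatorname{tr}_gR^N(\dd\phi,U\xi)\dd\phi$ in a local orthonormal frame $e_1,e_2$ of $(M,g)$. I will use the Weingarten formula $\nabla_X\xi=-\dd\phi(AX)$ and the Gauss formula. The first covariant derivative is
\[
\nabla_{e_i}(U\xi)=(e_iU)\xi-U\,\dd\phi(Ae_i).
\]
Differentiating a second time and tracing yields three contributions:
\begin{itemize}
\item a normal part $(\Delta U-U|A|^2)\xi$;
\item a tangential part $-2\,\dd\phi(A\grad U)-U\,\dd\phi(\operatorname{tr}\nabla A)$;
\item a second fundamental form term from differentiating $\dd\phi(Ae_i)$.
\end{itemize}
The Codazzi equation in a space form gives $\operatorname{tr}\nabla A=\grad(\operatorname{tr}A)=2\grad H$. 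The curvature term is $R^N(X,Y)Z=c(\langle Y,Z\rangle X-\langle X,Z\rangle Y)$, and its trace against the normal field $U\xi$ contributes $2cU\xi$ with no tangential part.

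Collecting the components, the normal component reads $\Delta U-U(|A|^2-2c)$, up to an overall sign convention. The tangential component is $-2\bigl(A\grad U+U\grad H\bigr)$, pushed forward by $\dd\phi$. Since $\dd\phi$ is injective and $\lambda>0$, vanishing of $\bar\tau_2$ is equivalent to the simultaneous vanishing of \eqref{eq:Ou-normal} and \eqref{eq:Ou-tangent}.

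I expect the main obstacle to be bookkeeping rather than concept. The key check is the tangential coefficient: the term $A\grad U$ appears twice (once from each derivative), while $U\grad H$ arises with coefficient $2$ after applying Codazzi, and these must combine to the common factor $-2$. Sign conventions for $\Delta$ and $\tau_2$ must also be handled consistently. I would cross-check the result against the constant-$U$ case, where it must reduce to the known isometric biharmonic hypersurface equations $\Delta H-H(|A|^2-2c)=0$ and $A\grad H+H\grad H=0$ (with $\lambda=1$).
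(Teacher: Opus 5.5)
Your proposal is correct and follows the paper's route. In dimension two the conformal change reduces biharmonicity to the $f$-biharmonic equation with $f=\lambda^2$ (up to the factor $\lambda^2$, the isometric bitension operator applied to $2U\xi$), and splitting that into normal and tangential parts with Codazzi and the space-form curvature ($\Ric^N(\xi,\xi)=2c$, $(\Ric^N(\xi))^\top=0$) gives Ou's system; you simply carry out the frame computation that the paper leaves to Ou's work.

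Two small slips need fixing. First, with your stated convention $R^N(X,Y)Z=c(\langle Y,Z\rangle X-\langle X,Z\rangle Y)$, the two terms of your displayed bitension field must have opposite signs: $\bar\tau_2=\tr_{\bar g}(\nabla^\phi\nabla^\phi-\nabla^\phi_{\bar\nabla})\bar\tau-\tr_{\bar g}R^N(\dd\phi,\bar\tau)\dd\phi$, since as written it puts $|A|^2+2c$ in the normal equation. Second, your consistency check is the case $\lambda\equiv1$, i.e.\ $U=H$, not the constant-$U$ case.
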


For completeness, we recall the geometric origin of the two equations.  Let $(e_1,e_2,\xi)$ be an orthonormal frame along the immersion.  The Gauss--Weingarten equations are
\begin{equation}\label{eq:GW}
\nabla^N_XY=\nabla_XY+g(A(X),Y)\xi,
\qquad
\nabla^N_X\xi=-A(X).
\end{equation}
For a weighted isometric immersion, splitting the weighted bitension field into its normal and tangential components gives
\begin{align}
\Delta(fH)-(fH)(|A|^2-\Ric^N(\xi,\xi))&=0,\label{eq:f-normal-general}\\
A(\grad(fH))+(fH)\bigl(\grad H-(\Ric^N(\xi))^\top\bigr)&=0.\label{eq:f-tangent-general}
\end{align}
In a three-dimensional space form,
\begin{equation}\label{eq:Ric-space-form}
\Ric^N(\xi,\xi)=2c,
\qquad
(\Ric^N(\xi))^\top=0.
\end{equation}
Taking $f=\lambda^2$ yields \eqref{eq:Ou-normal}--\eqref{eq:Ou-tangent}.

The system has two immediate geometric readings.  If $H$ is constant, then the tangential equation becomes $A(\grad U)=0$: nonconstant weighted modes can propagate only along a zero-principal-curvature direction.  If $A$ is invertible, the same equation strongly constrains $U$, and in the isoparametric case forces it to be constant.  These two alternatives reappear later as the cylinder and small-sphere mechanisms.  Our aim is to place both equations inside one spinorial operator without erasing this distinction.

\section{Geometric and spinorial conventions}\label{sec:conventions}

The proofs use a fixed collection of sign and Clifford conventions.  They are recorded here so that the later operator identities can be checked without ambiguity.  Readers interested first in the geometric mechanism may proceed to \Cref{sec:restricted} and return to this section as needed.

Let $N^3(c)$ be an oriented Riemannian three-dimensional space form of constant sectional curvature $c$.  We write
\[
\phi:M^2\longrightarrow N^3(c)
\]
for an oriented isometric immersion with induced metric $g$.

Unless a global hypothesis is stated explicitly, every spinorial identity below is local.  More precisely, on a simply connected chart $\Omega\subset M$ the immersion lifts to the simply connected model $\widetilde N^3(c)$, a Killing spinor on $\widetilde N^3(c)$ is restricted to $\Omega$, and all operator identities are formed on the induced local spinor bundle.  If $N^3(c)$ itself is the simply connected model, these spinors are global.  For a quotient space form, global descent depends on the chosen spin structure and the holonomy action on the Killing-spinor space; no such descent is assumed.  The local classification statements therefore apply to arbitrary space forms, while statements using a globally defined restricted spinor require this additional global spinorial hypothesis.  Since every oriented surface is spin, the intrinsic factorization identities themselves may also be considered globally after choosing a spin structure on $M$ and a global factorization field.  In particular, operator-factorization statements are intrinsic once these data are fixed, whereas identities involving a specific global section $U\psi$ additionally require descent of the restricted ambient Killing spinor.

Let $\xi$ be the chosen unit normal, and define the shape operator by
\begin{equation}\label{eq:Weingarten}
\nabla^N_X\xi=-A(X).
\end{equation}
The normalized mean curvature is
\begin{equation}\label{eq:H}
H=\frac12\tr A.
\end{equation}
Our Laplace--Beltrami operator is
\[
\Delta f=\diver(\grad f).
\]
Thus at a point where an orthonormal frame $(e_1,e_2)$ is geodesic,
\[
\Delta f=e_1e_1(f)+e_2e_2(f).
\]

Every oriented surface is spin.  We use the spin structure induced from the ambient spin structure and write $\Sigma M$ for the complex spinor bundle.  Clifford multiplication is denoted by a centered dot and satisfies
\begin{equation}\label{eq:Clifford}
X\cl Y+Y\cl X=-2g(X,Y).
\end{equation}
Let
\begin{equation}\label{eq:omega}
\vol=e_1\cl e_2
\end{equation}
be the real volume element for a positively oriented local orthonormal frame.  Then
\begin{equation}\label{eq:omega-identities}
\vol^2=-1,
\qquad
X\cl\vol=-\vol\cl X,
\qquad
D(\vol\cl\Psi)=-\vol\cl D\Psi.
\end{equation}
We choose the complex spinor inner product so that Clifford multiplication by a tangent vector is skew-Hermitian.

The intrinsic Dirac operator is
\begin{equation}\label{eq:Dirac}
D\Psi=e_1\cl\nabla_{e_1}\Psi+e_2\cl\nabla_{e_2}\Psi.
\end{equation}
For a scalar function $f$,
\begin{align}
D(f\Psi)&=\grad f\cl\Psi+fD\Psi,\label{eq:D-product}\\
D^2(f\Psi)&=-(\Delta f)\Psi-2\nabla_{\grad f}\Psi+fD^2\Psi.\label{eq:D2-product}
\end{align}

Let $J$ denote rotation by $+\pi/2$ on $TM$, so $Je_1=e_2$.  Our conventions imply
\begin{equation}\label{eq:Jomega}
X\cl\vol=-JX\cl,
\qquad
\vol\cl X=JX\cl.
\end{equation}

The simply connected model $\widetilde N^3(c)$ carries Killing spinors with Killing number $\eta\in\C$ satisfying
\begin{equation}\label{eq:eta-c}
4\eta^2=c.
\end{equation}
For a sphere and hyperbolic space of curvature radius $L$ we may choose
\begin{equation}\label{eq:eta-models}
\eta=\frac{1}{2L}
\quad\text{on }\Sph^3(L),
\qquad
\eta=\frac{i}{2L}
\quad\text{on }\Hyp^3(L).
\end{equation}
Changing the sign of the ambient Killing spinor number changes the corresponding chiral signs throughout, without changing the geometric content.

\section{Restricted Killing spinors}\label{sec:restricted}

The geometric input behind the Dirac bridge is a spinor already supplied by the immersion.  On a simply connected chart, the surface lifts to the simply connected ambient model, where a parallel, real Killing, or imaginary Killing spinor can be restricted to the surface.  Its covariant derivative records the shape operator, so one spinorial identity contains both intrinsic differentiation and extrinsic curvature.

Let $\Psi$ be a Killing spinor on the lifted ambient model,
\begin{equation}\label{eq:ambient-Killing}
\nabla^N_Y\Psi=\eta\,Y\cl_N\Psi.
\end{equation}
Under the standard hypersurface identification $\Sigma N|_M\cong\Sigma M$, let $\psi$ denote its restriction.  Morel's spinorial Gauss formula gives the following identity \cite{Morel}.

\begin{proposition}[Restricted Killing-spinor equation]\label{prop:restricted}
For every $X\in TM$,
\begin{equation}\label{eq:restricted-Killing}
\nabla_X\psi
=-\frac12A(X)\cl\psi-\eta X\cl\vol\cl\psi.
\end{equation}
Consequently,
\begin{equation}\label{eq:Dpsi}
D\psi=H\psi+2\eta\vol\cl\psi.
\end{equation}
The spinor $\psi$ is nowhere zero.
\end{proposition}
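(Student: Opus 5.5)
The plan is to derive \eqref{eq:restricted-Killing} from the spinorial Gauss formula, then obtain \eqref{eq:Dpsi} by contracting with Clifford multiplication, and finally deduce nonvanishing from the fact that $\psi$ solves a linear first-order system.

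\emph{Step 1: the restricted equation.} Morel's spinorial Gauss formula \cite{Morel} states that under $\Sigma N|_M\cong\Sigma M$ the ambient connection restricts as
\[
\nabla^N_X\Phi=\nabla_X\Phi+\tfrac12 A(X)\cl\Phi,
\]
with the sign fixed by \eqref{eq:Weingarten} and \eqref{eq:Clifford}. The same identification sends ambient Clifford multiplication $Y\cl_N$ to $Y\cl\vol\cl$ for tangent $Y$, again up to the normal-sign convention. Substituting the Killing equation \eqref{eq:ambient-Killing} gives
\[
\nabla_X\psi=-\tfrac12A(X)\cl\psi+\eta\,X\cl_N\psi .
\]
Rewriting $X\cl_N$ in intrinsic terms yields \eqref{eq:restricted-Killing}. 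The only real work, and the step I expect to be the main obstacle, is matching signs: the normal orientation, the sign in the Gauss formula, and which chirality the factor $\vol$ carries must all be consistent with \eqref{eq:Jomega}. I would pin these down by checking a totally geodesic slice in $\Sph^3$, or a round sphere in $\R^3$, where the answer is known. Any residual sign on $\eta$ can be absorbed by the remark after \eqref{eq:eta-models}.

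\emph{Step 2: the Dirac equation.} Apply $\sum_j e_j\cl$ to \eqref{eq:restricted-Killing}. For the shape-operator term, the symmetry of $A$ together with \eqref{eq:Clifford} gives
\[
\sum_j e_j\cl A(e_j)=-\tr A=-2H,
\]
so this term contributes $H\psi$. For the Killing term,
\[
-\eta\sum_j e_j\cl e_j\cl\vol\cl\psi=2\eta\vol\cl\psi .
\]
Adding the two contributions gives \eqref{eq:Dpsi}.

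\emph{Step 3: nowhere vanishing.} There are two routes. Directly, the ambient Killing spinor $\Psi$ is nowhere zero, since it is parallel for the modified connection $\nabla^N-\eta\,\cl_N$, and the identification $\Sigma N|_M\cong\Sigma M$ is a fibrewise isomorphism. Alternatively, \eqref{eq:restricted-Killing} says $\psi$ is parallel for a connection on $\Sigma M$; along any curve it then satisfies a linear ODE, so if it vanished at one point it would vanish identically. Hence the nontrivial restriction $\psi$ is nonzero everywhere.
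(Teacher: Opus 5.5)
Your route is the paper's route: the spinorial Gauss formula, the restriction of tangential ambient Clifford multiplication, substitution into the Killing equation, a trace to get the Dirac equation, and parallelism for a modified connection to get nonvanishing. Steps 2 and 3 are correct. Your intrinsic alternative in Step 3 is a valid substitute for the ambient argument: $\psi$ is parallel for $\nabla_X+\tfrac12A(X)\cl+\eta X\cl\vol\cl$. It does, however, need $\psi\not\equiv0$ on each component, and that fact itself comes from the ambient statement.

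The one step that fails as written is the sign in Step 1. You identify $Y\cl_N$ with $Y\cl\vol\cl$. Substituting this into $\nabla_X\psi=-\tfrac12A(X)\cl\psi+\eta X\cl_N\psi$ gives $\nabla_X\psi=-\tfrac12A(X)\cl\psi+\eta X\cl\vol\cl\psi$. Your Step 2 then yields $D\psi=H\psi-2\eta\vol\cl\psi$, which has the opposite chiral sign to \eqref{eq:restricted-Killing}--\eqref{eq:Dpsi}.

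The paper's proof fixes the convention $(X\cl_N\Psi)|_M=-X\cl\vol\cl\psi$. This is a genuine choice, and your proposed checks cannot settle it. A round sphere in $\R^3$ has $\eta=0$. On a totally geodesic slice, the sign depends on which of the two inequivalent $\mathrm{Cl}_3$-modules is used. Concretely, suppose intrinsic Clifford multiplication is $X\cl:=X\cl_N\xi\cl_N$, which is the identification giving the Gauss formula in the form you quote. Then $\vol=e_1\cl_Ne_2\cl_N$, and $X\cl_N=\epsilon\,X\cl\vol\cl$, where $\epsilon=\pm1$ is the scalar by which $e_1\cl_Ne_2\cl_N\xi\cl_N$ acts on $\Sigma N$. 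The stated formula requires $\epsilon=-1$.

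Appealing to the remark after \eqref{eq:eta-models} does not repair this. It would only show that the restriction of a Killing spinor with number $\eta$ satisfies \eqref{eq:restricted-Killing} with $-\eta$ in place of $\eta$. That is harmless downstream, since $-\eta$ is also a root of $4\eta^2=c$ and \Cref{lem:D2psi} does not depend on the sign. But it is not the proposition as stated for the given $\Psi$. State $\epsilon=-1$ explicitly, as the paper does, and your proof is complete.
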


\begin{proof}
The spinorial Gauss formula, together with the identification of ambient and intrinsic Clifford multiplication, gives
\[
(\nabla^N_X\Psi)|_M
=\nabla_X\psi+\frac12A(X)\cl\psi.
\]
For the orientation convention adopted here, tangential ambient Clifford multiplication restricts as
\[
(X\cl_N\Psi)|_M=-X\cl\vol\cl\psi.
\]
Substitution into \eqref{eq:ambient-Killing} gives \eqref{eq:restricted-Killing}.  Tracing with $e_1\cl$ and $e_2\cl$ gives \eqref{eq:Dpsi}, since $\tr A=2H$.  A nonzero Killing spinor is parallel for a modified connection and hence has no zeros; therefore its restriction is nowhere zero.
\end{proof}

The next identity is the basic curved analogue of the Euclidean spinorial Weierstrass calculation.

\begin{lemma}\label{lem:D2psi}
The restricted Killing spinor satisfies
\begin{equation}\label{eq:D2psi}
D^2\psi=(H^2+c)\psi+\grad H\cl\psi.
\end{equation}
\end{lemma}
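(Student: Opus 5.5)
The plan is to apply $D$ once more to the first-order identity \eqref{eq:Dpsi} of \Cref{prop:restricted}, namely $D\psi=H\psi+2\eta\vol\cl\psi$, and simplify using only the product rule \eqref{eq:D-product} and the volume-element identities \eqref{eq:omega-identities}. No curvature of $M$ needs to be computed directly, because the Gauss equation is already built into the restricted Killing-spinor equation \eqref{eq:restricted-Killing}.

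\textbf{Step 1.} Differentiate each term of $D\psi$ separately. By \eqref{eq:D-product},
\[
D(H\psi)=\grad H\cl\psi+H\,D\psi.
\]
By the third identity in \eqref{eq:omega-identities}, and since $\eta$ is a constant,
\[
D(2\eta\vol\cl\psi)=-2\eta\,\vol\cl D\psi.
\]

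\textbf{Step 2.} Substitute $D\psi=H\psi+2\eta\vol\cl\psi$ into both terms. Because $H$ is a real scalar, it commutes with $\vol$. This gives
\[
D^2\psi=\grad H\cl\psi+H^2\psi+2\eta H\vol\cl\psi-2\eta H\vol\cl\psi-4\eta^2\vol^2\cl\psi .
\]

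\textbf{Step 3.} The two chiral terms $\pm 2\eta H\vol\cl\psi$ cancel. Then $\vol^2=-1$ together with $4\eta^2=c$ from \eqref{eq:eta-c} turns the last term into $c\psi$. The result is exactly \eqref{eq:D2psi}.

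The only point needing care, and hence the main potential obstacle, is the identity $D(\vol\cl\Psi)=-\vol\cl D\Psi$, including its sign. This holds because $\vol$ is parallel for the spin connection (it is built from a parallel-in-norm oriented frame, with $\nabla\vol=0$ as a section of the Clifford bundle) and anticommutes with tangent vectors, so
\[
e_i\cl\nabla_{e_i}(\vol\cl\Psi)=e_i\cl\vol\cl\nabla_{e_i}\Psi=-\vol\cl e_i\cl\nabla_{e_i}\Psi .
\]
Since this identity is recorded in \eqref{eq:omega-identities}, I will simply invoke it.

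I will also check that the cancellation in Step 3 is sign-consistent with the chosen convention for $\eta$. Replacing $\eta$ by $-\eta$ leaves both the cancellation and the term $4\eta^2$ unchanged, which is consistent with the remark after \eqref{eq:eta-models}.
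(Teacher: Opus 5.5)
Your proof is correct and follows essentially the same route as the paper's: apply $D$ to \eqref{eq:Dpsi} using the product rule and $D(\vol\cl\Psi)=-\vol\cl D\Psi$, cancel the two mixed chiral terms, and convert $-4\eta^2\vol^2$ into $c$ via $\vol^2=-1$ and $4\eta^2=c$.
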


\begin{proof}
Using \eqref{eq:Dpsi}, the product rule, and $D(\vol\cl\psi)=-\vol\cl D\psi$, we compute
\begin{align*}
D^2\psi
&=D(H\psi)+2\eta D(\vol\cl\psi)\\
&=\grad H\cl\psi+H(H\psi+2\eta\vol\cl\psi)
   -2\eta\vol\cl(H\psi+2\eta\vol\cl\psi).
\end{align*}
The mixed terms cancel, while $\vol^2=-1$ and $4\eta^2=c$.  This gives \eqref{eq:D2psi}.
\end{proof}

\begin{lemma}\label{lem:Dgrad}
For every smooth real-valued function $u$,
\begin{equation}\label{eq:Dgrad}
D(\grad u\cl\psi)
=-(\Delta u)\psi+(A-H\Id)(\grad u)\cl\psi.
\end{equation}
In particular, the ambient Killing number $\eta$ cancels from this identity.
\end{lemma}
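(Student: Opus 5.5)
Work at a point $x$ with a local orthonormal frame $(e_1,e_2)$ that is geodesic at $x$, so $\nabla e_j=0$ there. Set $V=\grad u$. Expanding the Dirac operator on the product $V\cl\psi$ gives
\[
D(V\cl\psi)=\sum_j e_j\cl(\nabla_{e_j}V)\cl\psi+\sum_j e_j\cl V\cl\nabla_{e_j}\psi .
\]
The two sums will be handled separately, and only the restricted Killing-spinor equation \eqref{eq:restricted-Killing} of \Cref{prop:restricted} is needed as input.

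\textbf{The first sum.} This is $\sum_j e_j\cl\nabla_{e_j}V$ acting on $\psi$. Split it with the Clifford relation \eqref{eq:Clifford} into a symmetric and a skew part:
\begin{itemize}
\item the symmetric part gives $-\sum_j g(e_j,\nabla_{e_j}V)=-\diver V=-\Delta u$;
\item the skew part is $\tfrac12\sum_{j,k}\bigl(g(\nabla_{e_j}V,e_k)-g(\nabla_{e_k}V,e_j)\bigr)e_j\cl e_k$, which vanishes because the Hessian of $u$ is symmetric.
\end{itemize}
So the first sum equals $-(\Delta u)\psi$.

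\textbf{The second sum.} Substitute \eqref{eq:restricted-Killing}. This produces two pieces.

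The shape-operator piece is $-\tfrac12\sum_j e_j\cl V\cl A(e_j)\cl\psi$. Use the standard trace identity for a symmetric endomorphism $S$,
\[
\sum_j e_j\cl X\cl S(e_j)=-2S(X)+(\tr S)X,
\]
which holds in dimension two. It can be checked directly in a frame diagonalizing $A$, using $\sum_j e_j\cl X\cl e_j=0$ (valid in dimension $2$, where $\sum_j e_jXe_j=(n-2)X$). With $S=A$ and $X=V$ this gives
\[
-\tfrac12\bigl(-2A(V)+2HV\bigr)\cl\psi=(A-H\Id)(V)\cl\psi .
\]

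The $\eta$-piece is $-\eta\sum_j e_j\cl V\cl e_j\cl\vol\cl\psi=-\eta\bigl(\sum_j e_j\cl V\cl e_j\bigr)\cl\vol\cl\psi$. Since $\sum_j e_j\cl V\cl e_j=(n-2)V=0$ for $n=2$, this piece vanishes identically. That is exactly the claimed cancellation of the Killing number $\eta$.

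Adding the two sums yields \eqref{eq:Dgrad}.

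\textbf{Main obstacle.} The only delicate point is sign bookkeeping in the trace identity under the convention $X\cl Y+Y\cl X=-2g(X,Y)$. I would verify it explicitly in a principal frame, $A(e_j)=k_je_j$. For $V=e_1$:
\begin{itemize}
\item $e_1\cl e_1\cl k_1e_1=-k_1e_1$;
\item $e_2\cl e_1\cl k_2e_2=k_2e_1$.
\end{itemize}
The total is $(k_2-k_1)e_1=-2A(e_1)+2He_1$, which confirms the identity and hence the sign $(A-H\Id)$ in the final formula.
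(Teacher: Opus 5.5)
Your proof is correct and follows the paper's own argument. You split $D(\grad u\cl\psi)$ into the Hessian term and the $\nabla\psi$ term, use symmetry of the Hessian to get $-(\Delta u)\psi$, evaluate the shape-operator piece in a principal frame, and kill the $\eta$-piece with $\sum_j e_j\cl X\cl e_j=0$ in dimension two. Your trace identity $\sum_j e_j\cl X\cl S(e_j)=-2S(X)+(\tr S)X$ just repackages the paper's principal-frame computation; it actually holds in every dimension, so dimension two is needed only for the $\eta$-cancellation.
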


\begin{proof}
At a point $p$, choose a geodesic orthonormal frame diagonalizing $A$, with $A(e_j)=k_je_j$.  Expanding $D(\grad u\cl\psi)$ gives
\[
\sum_j e_j\cl(\nabla_{e_j}\grad u)\cl\psi
+\sum_j e_j\cl\grad u\cl\nabla_{e_j}\psi.
\]
The symmetric Hessian contributes $-(\Delta u)\psi$.  Inserting \eqref{eq:restricted-Killing}, the shape-operator contribution is
\[
-\frac12\sum_j e_j\cl\grad u\cl A(e_j)\cl\psi
=(A-H\Id)(\grad u)\cl\psi.
\]
The Killing-spinor contribution vanishes because, in dimension two,
\[
\sum_{j=1}^2e_j\cl X\cl e_j=0
\]
for every tangent vector $X$.
\end{proof}

\section{The exact spinorial Ou operator}\label{sec:main}

We now answer the first question from the introduction.  Applying $D^2$ to the weighted spinor $U\psi$ nearly reproduces Ou's equations, but ambient curvature creates an additional first-order chiral term.  The calculation below identifies that term and then removes it by a uniquely natural lower-order correction.

We begin by computing the square of the Dirac operator on $U\psi$.

\begin{theorem}[Curved spinorial square identity]\label{thm:Sc}
For every smooth real-valued function $U$,
\begin{equation}\label{eq:Sc}
\begin{split}
D^2(U\psi)
={}&\bigl[-\Delta U+(H^2+c)U\bigr]\psi\\
&+\bigl[A(\grad U)+U\grad H\bigr]\cl\psi\\
&+2\eta\,\grad U\cl\vol\cl\psi.
\end{split}
\end{equation}
\end{theorem}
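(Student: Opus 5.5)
The plan is to apply the second-order product rule \eqref{eq:D2-product} directly with $f=U$ and $\Psi=\psi$. This splits $D^2(U\psi)$ into three pieces:
\[
D^2(U\psi)=-(\Delta U)\psi-2\nabla_{\grad U}\psi+U\,D^2\psi ,
\]
and each piece is then evaluated using results already established in \Cref{sec:restricted}.

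First, for the last term I will invoke \Cref{lem:D2psi}, which gives
\[
U\,D^2\psi=(H^2+c)U\psi+U\grad H\cl\psi .
\]
This supplies the $(H^2+c)U$ part of the scalar coefficient and the $U\grad H$ part of the vector coefficient. Second, for the middle term I will use the restricted Killing-spinor equation \eqref{eq:restricted-Killing} with $X=\grad U$. This gives
\[
-2\nabla_{\grad U}\psi=A(\grad U)\cl\psi+2\eta\,\grad U\cl\vol\cl\psi .
\]
It produces the remaining tangential term $A(\grad U)\cl\psi$ together with the chiral drift term $2\eta\,\grad U\cl\vol\cl\psi$.

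Collecting the three contributions, $-(\Delta U)\psi$ joins $(H^2+c)U\psi$ in the scalar bracket, and the two vector terms combine into $\bigl[A(\grad U)+U\grad H\bigr]\cl\psi$. This is exactly \eqref{eq:Sc}.

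I do not expect a genuine obstacle here; the computation is short. The only point needing care is sign bookkeeping. The product formula \eqref{eq:D2-product} must be checked against the convention \eqref{eq:Clifford}, where $\grad f\cl\grad f=-|\grad f|^2$ and the Hessian trace gives $-\Delta f$ with $\Delta=\diver\grad$. The factor $-2$ in front of $\nabla_{\grad U}\psi$ must also be combined correctly with the minus signs in \eqref{eq:restricted-Killing}. If desired, the product formula itself can be verified in a geodesic frame at a point, by expanding $D(\grad f\cl\Psi+fD\Psi)$ and using $\sum_{i,j} e_i\cl e_j\,\nabla^2_{e_i,e_j}f=-\Delta f$.

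No use of \Cref{lem:Dgrad} is needed for this identity. No frame diagonalizing $A$ is needed either, since $A(\grad U)$ appears directly from the Killing-spinor equation.
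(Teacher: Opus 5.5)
Your proposal is correct and follows the paper's proof exactly: you expand by \eqref{eq:D2-product}, evaluate $-2\nabla_{\grad U}\psi$ via \eqref{eq:restricted-Killing}, substitute \Cref{lem:D2psi} for $U D^2\psi$, and collect the scalar, vector, and chiral terms. Your signs are right, including $-2\nabla_{\grad U}\psi=A(\grad U)\cl\psi+2\eta\,\grad U\cl\vol\cl\psi$.
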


\begin{proof}
Apply \eqref{eq:D2-product} and use \eqref{eq:restricted-Killing}:
\begin{align*}
D^2(U\psi)
&=-(\Delta U)\psi-2\nabla_{\grad U}\psi+UD^2\psi\\
&=-(\Delta U)\psi
  +A(\grad U)\cl\psi
  +2\eta\grad U\cl\vol\cl\psi\\
&\qquad+U\bigl[(H^2+c)\psi+\grad H\cl\psi\bigr].
\end{align*}
Collecting scalar, tangent-vector, and chiral terms proves the formula.
\end{proof}

\begin{remark}[The chiral drift]\label{rem:chiral}
By \eqref{eq:Jomega}, the final term in \eqref{eq:Sc} is
\[
2\eta\grad U\cl\vol\cl\psi
=-2\eta J\grad U\cl\psi.
\]
Thus the ambient curvature contributes not only the scalar shift $cU$ but also a first-order rotation of $\grad U$.  This is the obstruction to encoding Ou's system by $D^2$ plus a scalar potential alone.
\end{remark}

Define the second-order operator on $\Sigma M$ by
\begin{equation}\label{eq:B-def}
\boxed{
\Bop
:=D^2+2\eta\vol D-2\eta H\vol+|A|^2-H^2-2c.
}
\end{equation}
Here $H$, $|A|^2$, and $c$ act by scalar multiplication and $\vol$ acts by Clifford multiplication.

\begin{theorem}[Exact spinorial encoding]\label{thm:exact}
For every real-valued function $U$,
\begin{equation}\label{eq:B-action}
\boxed{
\Bop(U\psi)
=\bigl[-\Delta U+(|A|^2-2c)U\bigr]\psi
 +\bigl[A(\grad U)+U\grad H\bigr]\cl\psi.
}
\end{equation}
Consequently, for $U=\lambda^2H$, the conformal immersion \eqref{eq:conformal-immersion} is biharmonic if and only if
\begin{equation}\label{eq:B-zero}
\Bop(U\psi)=0.
\end{equation}
\end{theorem}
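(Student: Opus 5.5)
The plan is to start from the curved square identity of \Cref{thm:Sc} and compute the action of the correction terms $2\eta\vol D-2\eta H\vol$ on $U\psi$, showing that they exactly cancel the chiral drift and the extra scalar $(H^2+c)U$ up to the potential $|A|^2-H^2-2c$. First, by \eqref{eq:D-product} and \eqref{eq:Dpsi},
\[
D(U\psi)=\grad U\cl\psi+UH\psi+2\eta U\vol\cl\psi,
\]
so
\[
2\eta\vol\cl D(U\psi)=2\eta\vol\cl\grad U\cl\psi+2\eta UH\vol\cl\psi+4\eta^2U\vol^2\cl\psi.
\]
With $\vol^2=-1$ and $4\eta^2=c$, the last term equals $-cU\psi$. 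The middle term is cancelled exactly by $-2\eta H\vol\cl(U\psi)$.

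Next we handle the first term. By \eqref{eq:omega-identities}, $\vol\cl\grad U=-\grad U\cl\vol$, so $2\eta\vol\cl\grad U\cl\psi=-2\eta\grad U\cl\vol\cl\psi$. This is precisely the negative of the chiral drift appearing in \eqref{eq:Sc}, as anticipated in \Cref{rem:chiral}. Adding everything gives the scalar coefficient
\[
-\Delta U+(H^2+c)U-cU+(|A|^2-H^2-2c)U=-\Delta U+(|A|^2-2c)U,
\]
and the vector term $A(\grad U)+U\grad H$ is left untouched. This proves \eqref{eq:B-action}. The only delicate point is sign bookkeeping: the anticommutation $X\cl\vol=-\vol\cl X$ and the identity $\vol^2=-1$ must be applied consistently with the conventions of \Cref{sec:conventions}. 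No analytic obstacle arises.

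For the equivalence, we use that $\psi$ is nowhere zero (\Cref{prop:restricted}). Then at each point the map $(f,X)\mapsto f\psi+X\cl\psi$, for real $f$ and tangent $X$, is injective. To see this, suppose $f\psi+X\cl\psi=0$. Applying $X\cl$ gives $fX\cl\psi=|X|^2\psi$. Substituting $X\cl\psi=-f\psi$ then gives $-f^2\psi=|X|^2\psi$, so $f=0$ and $X=0$. Alternatively, one can use that $\psi$ and $X\cl\psi$ are orthogonal for the real part of the Hermitian product, since Clifford multiplication is skew-Hermitian. Hence $\Bop(U\psi)=0$ holds if and only if both brackets in \eqref{eq:B-action} vanish, that is, if and only if Ou's equations \eqref{eq:Ou-normal}--\eqref{eq:Ou-tangent} hold. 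By \Cref{thm:Ou}, this is equivalent to biharmonicity of the conformal immersion.
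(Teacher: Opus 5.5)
Your proof is correct and follows the paper's argument step by step. You expand $2\eta\vol\cl D(U\psi)$ in the same way, cancel the chiral drift and the $HU\vol\cl\psi$ term, and combine the scalar terms identically; your algebraic injectivity step (applying $X\cl$ to obtain $(f^2+|X|^2)\psi=0$) is a minor variant of the paper's Hermitian-product argument, which you also mention, and both use only that $\psi$ is nowhere zero and that the coefficients are real.
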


\begin{proof}
Using \eqref{eq:D-product} and \eqref{eq:Dpsi},
\begin{align*}
2\eta\vol D(U\psi)
={}&2\eta\vol\cl\grad U\cl\psi
 +2\eta HU\vol\cl\psi
 +4\eta^2U\vol^2\cl\psi\\
={}&-2\eta\grad U\cl\vol\cl\psi
 +2\eta HU\vol\cl\psi-cU\psi.
\end{align*}
The first term cancels the chiral drift in \eqref{eq:Sc}; the term $-2\eta H\vol$ cancels the remaining $HU\vol\cl\psi$ term.  The scalar contributions combine as
\[
(H^2+c)U-cU+(|A|^2-H^2-2c)U=(|A|^2-2c)U.
\]
This proves \eqref{eq:B-action}.

It remains to show that the scalar and vector terms vanish separately.  Here
both coefficients are real because $U,H,A$, and $c$ are real.  Suppose
\[
a\psi+X\cl\psi=0
\]
with $a\in\R$ and $X\in TM$.  Taking the real part of the Hermitian product with $\psi$ gives $a|\psi|^2=0$, because tangent Clifford multiplication is skew-Hermitian.  Since $\psi$ is nowhere zero, $a=0$.  Then $X\cl\psi=0$, and applying $X\cl$ once more gives $-|X|^2\psi=0$, hence $X=0$.  Applying this observation to \eqref{eq:B-action} and invoking \Cref{thm:Ou} proves the equivalence.
\end{proof}

\begin{corollary}[Euclidean reduction]\label{cor:Euclidean}
For $c=0$ and $\eta=0$,
\begin{equation}\label{eq:B-Euclidean}
\mathscr{B}_0=D^2+|A|^2-H^2,
\end{equation}
and
\[
\mathscr{B}_0(U\psi)
=\bigl[-\Delta U+|A|^2U\bigr]\psi
 +\bigl[A(\grad U)+U\grad H\bigr]\cl\psi.
\]
Thus the Euclidean biharmonic conformal surface equations are exactly a Dirac-square equation with scalar geometric potential.
\end{corollary}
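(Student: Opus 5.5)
This is a direct specialization of \Cref{thm:exact}, so I would argue by substitution rather than recompute.

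\emph{Step 1: the operator.} Put $c=0$ in \eqref{eq:B-def}. The constraint \eqref{eq:eta-c} then reads $4\eta^2=0$, which forces $\eta=0$. Both chiral terms $2\eta\vol D$ and $-2\eta H\vol$ vanish, as does $-2c$. What remains is $\mathscr B_0=D^2+|A|^2-H^2$, which is \eqref{eq:B-Euclidean}.

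\emph{Step 2: the action on $U\psi$.} Substitute $c=0$ into \eqref{eq:B-action}. This gives the stated scalar coefficient $-\Delta U+|A|^2U$ and the unchanged tangential coefficient $A(\grad U)+U\grad H$.

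As an independent check I would recompute from \Cref{thm:Sc} with $\eta=c=0$. In this case $\psi$ is the restriction of a parallel spinor, and $D\psi=H\psi$ by \eqref{eq:Dpsi}. The identity \eqref{eq:Sc} gives $D^2(U\psi)=[-\Delta U+H^2U]\psi+[A(\grad U)+U\grad H]\cl\psi$. Adding $(|A|^2-H^2)U\psi$ yields the claim, with no chiral drift to cancel (cf.\ \Cref{rem:chiral}).

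\emph{Step 3: the equivalence.} The interpretive sentence, that the Euclidean BCI equations are exactly $\mathscr B_0(U\psi)=0$, is the $c=0$ case of \eqref{eq:B-zero}. The separation argument in the proof of \Cref{thm:exact} applies verbatim. It uses only that $\psi$ is nowhere zero (\Cref{prop:restricted}) and that tangent Clifford multiplication is skew-Hermitian, and neither fact depends on $\eta$. Combined with \Cref{thm:Ou} at $c=0$, this gives the equivalence with \eqref{eq:Ou-normal}--\eqref{eq:Ou-tangent}.

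I expect no real obstacle. The only point needing care is that $c=0$ forces $\eta=0$ via \eqref{eq:eta-c}, so the restricted spinor is parallel and the Killing terms in \eqref{eq:restricted-Killing} drop out consistently.
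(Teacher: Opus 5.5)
Your proposal is correct and follows the paper's route: the corollary is simply the specialization $c=0$ (hence $\eta=0$ by \eqref{eq:eta-c}) of the definition \eqref{eq:B-def} and of \Cref{thm:exact}, with the equivalence to Ou's system inherited from \eqref{eq:B-zero}. Your recomputation from \Cref{thm:Sc} is a valid consistency check, but the paper does not need it.
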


\begin{remark}[Ellipticity]
The principal symbol of $\Bop$ is the principal symbol of $D^2$ and is therefore $|\zeta|^2\Id_{\Sigma M}$.  Hence $\Bop$ is a Laplace-type elliptic operator, although its lower-order chiral terms need not be self-adjoint for all choices of $\eta$ and conventions.
\end{remark}

\section{A first-order Dirac-type prolongation}\label{sec:first-order}

The second-order encoding is universal, whereas an undoubled first-order factorization will turn out to be exceptional.  Before imposing any factorization hypothesis, it is useful to record the first-order structure that is always available: a doubled elliptic system obtained by introducing one auxiliary spinor.

The exact operator has the following near-factorization.  Define
\begin{equation}\label{eq:Q-def}
\Qop:=D-H-2\eta\vol.
\end{equation}
By \eqref{eq:Dpsi},
\begin{equation}\label{eq:Q-action}
\Qop(U\psi)=\grad U\cl\psi.
\end{equation}

\begin{proposition}[Near-factorization]\label{prop:near-factorization}
As an operator on $\Sigma M$,
\begin{equation}\label{eq:near-factorization}
\boxed{
\Bop
=(D+H)(D-H-2\eta\vol)+\grad H\cl+|A|^2-2c.
}
\end{equation}
\end{proposition}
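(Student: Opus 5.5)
The plan is a direct operator expansion of the right-hand side of \eqref{eq:near-factorization}, checked term by term against the definition \eqref{eq:B-def}. No restricted spinor is needed: the identity is between operators on arbitrary sections $\Phi$ of $\Sigma M$. We therefore apply $(D+H)(D-H-2\eta\vol)$ to a general $\Phi$ and expand, using only the product rule \eqref{eq:D-product} and the anticommutation rule $D(\vol\cl\Phi)=-\vol\cl D\Phi$ from \eqref{eq:omega-identities}.

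Distributing the factors gives
\[
D^2\Phi-D(H\Phi)-2\eta D(\vol\cl\Phi)+HD\Phi-H^2\Phi-2\eta H\vol\cl\Phi .
\]
The one point needing care is that the multiplication operators $H$ and $\vol$ do not commute with $D$. By \eqref{eq:D-product}, $D(H\Phi)=\grad H\cl\Phi+HD\Phi$. Hence the pair $-D(H\Phi)+HD\Phi$ collapses to the commutator term $-\grad H\cl\Phi$. Similarly, $D(\vol\cl\Phi)=-\vol\cl D\Phi$ turns $-2\eta D(\vol\cl\Phi)$ into $+2\eta\vol\cl D\Phi$. The constant $\eta$ commutes with everything, since $4\eta^2=c$ is constant.

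Collecting terms, we expect
\[
(D+H)(D-H-2\eta\vol)=D^2+2\eta\vol D-2\eta H\vol-H^2-\grad H\cl .
\]
Adding $\grad H\cl+|A|^2-2c$ cancels the commutator term exactly and produces $D^2+2\eta\vol D-2\eta H\vol+|A|^2-H^2-2c$, which is $\Bop$ by \eqref{eq:B-def}.

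There is no real obstacle. The only step to watch is the sign bookkeeping in the two commutators: the sign of $[D,H]=\grad H\cl$ and the anticommutation of $D$ with $\vol$. An error in either would produce a spurious $\grad H$ or chiral drift term. As a consistency check, we would apply both sides to $U\psi$, using \eqref{eq:Q-action} and \Cref{lem:Dgrad}, and confirm agreement with \eqref{eq:B-action}.
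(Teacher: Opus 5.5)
Your proposal is correct and is essentially the paper's own proof: expand $(D+H)(D-H-2\eta\vol)$ using $D(H\Phi)=\grad H\cl\Phi+HD\Phi$ and $D(\vol\cl\Phi)=-\vol\cl D\Phi$ to get $D^2+2\eta\vol D-2\eta H\vol-H^2-\grad H\cl$, then add $\grad H\cl+|A|^2-2c$ to recover the definition of $\Bop$. Your optional check on $U\psi$, using the action of $\Qop$ and \Cref{lem:Dgrad}, does reproduce \eqref{eq:B-action}, although the paper does not need this step.
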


\begin{proof}
Using $D(H\Phi)=\grad H\cl\Phi+HD\Phi$ and $D(\vol\cl\Phi)=-\vol\cl D\Phi$, we obtain
\begin{align*}
(D+H)(D-H-2\eta\vol)
={}&D^2+2\eta\vol D-2\eta H\vol-H^2-\grad H\cl.
\end{align*}
Adding $\grad H\cl+|A|^2-2c$ gives \eqref{eq:B-def}.
\end{proof}

The residual term in \eqref{eq:near-factorization} is geometrically transparent: $\grad H\cl$ measures failure of the CMC condition, while $|A|^2-2c$ is precisely the normal potential in Ou's equation.

Set
\begin{equation}\label{eq:Phi-chi}
\Phi:=U\psi,
\qquad
\chi:=\Qop\Phi.
\end{equation}
Then \eqref{eq:B-zero} is equivalent, on the real line subbundle generated by $\psi$, to a first-order system.

\begin{theorem}[Canonical first-order prolongation]\label{thm:first-order}
Let $\Phi=U\psi$ with $U$ real.  The equation $\Bop\Phi=0$ is equivalent to the existence of a spinor $\chi$ such that
\begin{equation}\label{eq:first-order-system}
\boxed{
\begin{pmatrix}
D-H-2\eta\vol & -\Id\\[1mm]
\grad H\cl+|A|^2-2c & D+H
\end{pmatrix}
\begin{pmatrix}
\Phi\\ \chi
\end{pmatrix}
=0.
}
\end{equation}
The matrix operator in \eqref{eq:first-order-system} is elliptic of first order.
\end{theorem}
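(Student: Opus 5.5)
The proof has two parts: the equivalence of $\Bop\Phi=0$ with the system \eqref{eq:first-order-system}, and the ellipticity of the matrix operator.

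\emph{Equivalence.} The first row of \eqref{eq:first-order-system} reads $\Qop\Phi-\chi=0$. It therefore forces $\chi=\Qop\Phi$, which by \eqref{eq:Q-action} equals $\grad U\cl\psi$. Substituting this into the second row gives
\[
(D+H)\Qop\Phi+\grad H\cl\Phi+(|A|^2-2c)\Phi=0.
\]
By \Cref{prop:near-factorization}, this is exactly $\Bop\Phi=0$.

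Conversely, suppose $\Bop\Phi=0$. Define $\chi:=\Qop\Phi$. The first row then holds by construction. The second row reproduces $\Bop\Phi$ through the identity \eqref{eq:near-factorization}, so it holds as well.

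Thus the equivalence is purely algebraic. The restriction $\Phi=U\psi$ with $U$ real plays no role here; it matters only when one wants to read the system back as Ou's equations, which is done via \Cref{thm:exact}. I would also note that $\chi$ is uniquely determined by $\Phi$, so the two solution spaces are in bijection.

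\emph{Ellipticity.} The principal (first-order) part of the matrix operator is $\operatorname{diag}(D,D)$. The entries $-\Id$, $\grad H\cl$, $|A|^2-2c$, $\pm H$ and $-2\eta\vol$ are all of order zero. Hence the principal symbol at a covector $\zeta\neq0$ is $\operatorname{diag}(i\zeta\cl,\,i\zeta\cl)$, up to the standard factor $i$. Its square is $|\zeta|^2\Id$, so it is invertible.

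The only point I expect to need care is the notion of ellipticity being used. The operator is elliptic in the standard Douglis--Nirenberg sense with uniform order one for both components, because the off-diagonal zeroth-order terms do not enter the principal symbol. I would therefore remark explicitly that the lower-order coupling $-\Id$ is treated as a perturbation and is not part of the symbol.
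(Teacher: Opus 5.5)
Your proposal is correct and follows the paper's proof essentially step for step. The first row defines $\chi=\Qop\Phi$, the near-factorization \eqref{eq:near-factorization} turns the second row into $\Bop\Phi=0$ (and conversely), and ellipticity follows because the principal symbol is block diagonal, $\operatorname{diag}(\zeta^\sharp\cl,\zeta^\sharp\cl)$, which squares to a nonzero multiple of the identity. Your observation that the line constraint $\Phi=U\psi$ plays no role in the equivalence itself is accurate and consistent with the remark following the theorem in the paper.
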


\begin{proof}
The first row defines $\chi=\Qop\Phi$.  The second row then becomes
\[
(D+H)\Qop\Phi+(\grad H\cl+|A|^2-2c)\Phi=\Bop\Phi
\]
by \eqref{eq:near-factorization}.  Conversely, eliminating $\chi$ from the system yields $\Bop\Phi=0$.

For a nonzero covector $\zeta$, the principal symbol is block diagonal:
\[
\sigma_1(\zeta)
=\begin{pmatrix}
\zeta^\sharp\cl&0\\0&\zeta^\sharp\cl
\end{pmatrix}.
\]
Since $(\zeta^\sharp\cl)^2=-|\zeta|^2\Id$, this symbol is invertible for $\zeta\neq0$.
\end{proof}

\begin{remark}
The line constraint $\Phi\in\R\psi$ is essential for the direct equivalence with a scalar conformal factor.  The unconstrained matrix system has a larger spinorial solution space.  Thus \eqref{eq:first-order-system} is best viewed as a Dirac-type prolongation of Ou's equation rather than an unconstrained replacement of the immersion problem.
\end{remark}

\section{Exact scalar--chiral factorization}\label{sec:factorization}

We now turn from the universal encoding to the genuinely restrictive question.  We ask for an \emph{undoubled} factorization with the same Dirac principal symbol in both factors and with zeroth-order coefficients in the smallest natural Clifford algebra, the scalar--chiral span of $\Id$ and $\vol$.  The coefficient comparison is elementary, but its consequences are geometric: curvature fixes the scalar normalization, while the remaining coefficients satisfy a Cauchy--Riemann equation coupled to an algebraic Riccati relation.

\begin{definition}[Monic scalar--chiral factorization]\label{def:monic-factorization}
On an open set $\Omega\subset M$, a \emph{monic scalar--chiral factorization} of $\Bop$ is an identity
\begin{equation}\label{eq:general-factorization}
\Bop=(D+a+b\vol)(D+p+q\vol),
\end{equation}
where $a,b,p,q\in C^\infty(\Omega,\C)$.  The word monic means that both first-order factors have principal symbol equal to that of $D$.  This class does not include matrix-valued superpotentials or factors with a different Clifford-valued principal symbol.
\end{definition}

\begin{theorem}[Complete coefficient normal form]\label{thm:general-factorization}
The identity \eqref{eq:general-factorization} holds if and only if
\begin{align}
a&=-p,\label{eq:general-a}\\
b&=q+2\eta,\label{eq:general-b}\\
\grad p&=J\grad q,\label{eq:general-CR}\\
2\eta(p+H)&=0,\label{eq:general-H}\\
p^2+q(q+2\eta)&=H^2+2c-|A|^2.\label{eq:general-algebraic}
\end{align}
In particular, if $c\neq0$, every monic scalar--chiral factorization is forced to have $p=-H$.
\end{theorem}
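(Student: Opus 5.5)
The plan is to expand the right-hand side of \eqref{eq:general-factorization} as an operator on $\Sigma M$ and compare it term by term with \eqref{eq:B-def}. I will use only the product rule $D(f\Phi)=\grad f\cl\Phi+fD\Phi$, the anticommutation $D(\vol\cl\Phi)=-\vol\cl D\Phi$, and $\vol^2=-1$. Applying these to $(D+a+b\vol)(D+p+q\vol)\Phi$ gives
\[
D^2\Phi+\grad p\cl\Phi+\grad q\cl\vol\cl\Phi+(p+a)D\Phi+(b-q)\vol\cl D\Phi+\bigl[ap-bq+(aq+bp)\vol\bigr]\Phi .
\]
By \eqref{eq:Jomega}, $\grad q\cl\vol=-J\grad q\cl$, so the first-order-in-coefficient terms combine into $(\grad p-J\grad q)\cl\Phi$.

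Next I compare with $\Bop=D^2+2\eta\vol D-2\eta H\vol+(|A|^2-H^2-2c)$. The operator terms are read off in four independent channels: $D$, $\vol D$, Clifford multiplication by a tangent vector, and the zeroth-order part spanned by $\Id$ and $\vol$. Equating these channels gives four conditions.
\begin{itemize}
\item The $D$ coefficient gives $a=-p$.
\item The $\vol D$ coefficient gives $b-q=2\eta$.
\item The vector channel gives $\grad p=J\grad q$.
\item The zeroth-order part gives $ap-bq=|A|^2-H^2-2c$ and $aq+bp=-2\eta H$.
\end{itemize}
Substituting $a=-p$ and $b=q+2\eta$ into the last pair, the scalar equation becomes $-p^2-q(q+2\eta)=|A|^2-H^2-2c$, which is \eqref{eq:general-algebraic}. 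The chiral equation becomes $-pq+(q+2\eta)p=2\eta p=-2\eta H$, which is \eqref{eq:general-H}. The converse direction is the same computation read backwards. Finally, if $c\neq0$ then $\eta\neq0$ by \eqref{eq:eta-c}, so \eqref{eq:general-H} forces $p=-H$.

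The main obstacle is justifying that equality of operators really splits into these independent coefficient identities. The coefficients are complex functions, and $\vol$ commutes with tangent Clifford multiplication only up to sign, so the channels must be shown not to mix. I would handle this by a symbol argument in three steps.
\begin{itemize}
\item Take the difference of the two sides. It is a first-order operator $\alpha D+\beta\vol D+X\cl+\gamma+\delta\vol$ with complex scalars $\alpha,\beta,\gamma,\delta$ and a complexified tangent vector $X$.
\item Its principal symbol is $(\alpha+\beta\vol)\zeta^\sharp\cl$. Because $\zeta^\sharp\cl$ is invertible and $\alpha+\beta\vol$ is invertible unless $\alpha=\pm i\beta$, I will instead use that at each point the endomorphisms $\Id,\vol,e_1\cl,e_2\cl$ form a basis of $\operatorname{End}(\Sigma_pM)\cong M_2(\C)$ over $\C$. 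Vanishing of the symbol for all $\zeta$ then gives $\alpha=\beta=0$.
\item The zeroth-order remainder $X\cl+\gamma+\delta\vol$ vanishes if and only if each component vanishes, by the same basis property. Writing $X=X_1+iX_2$ with real $X_1,X_2$ and testing against the real fields gives the complexified vector identity, which is the real and imaginary parts of $\grad p=J\grad q$.
\end{itemize}
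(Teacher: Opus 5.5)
Your proposal is correct and follows essentially the same route as the paper: the same expansion of $(D+a+b\vol)(D+p+q\vol)$, the same rewriting $\grad q\cl\vol=-J\grad q\cl$ for the vector channel, and the same justification of coefficient matching via invertibility of $\zeta^\sharp\cl$ together with the complex basis $\{\Id,e_1\cl,e_2\cl,\vol\}$ of $\operatorname{End}(\Sigma M)$. Your aside about when $\alpha+\beta\vol$ is invertible is unnecessary: vanishing of $(\alpha+\beta\vol)\zeta^\sharp\cl$ with $\zeta^\sharp\cl$ invertible already gives $\alpha+\beta\vol=0$, and hence $\alpha=\beta=0$.
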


\begin{proof}
For arbitrary scalar functions $a,b,p,q$, the product rule and
$D(\vol\Phi)=-\vol D\Phi$ give
\begin{align*}
(D+a+b\vol)(D+p+q\vol)
={}&D^2+(a+p)D+(b-q)\vol D\\
&+\grad p\cl+\grad q\cl\vol
 +(ap-bq)+(aq+bp)\vol.
\end{align*}
The coefficient comparison is legitimate on the full complex spinor bundle.  Indeed,
for every nonzero covector $\zeta$, Clifford multiplication by $\zeta^\sharp$
is invertible, so the principal symbols of $D$ and $\vol D$ are independent
because $\Id$ and $\vol$ are linearly independent.  At zeroth order,
$\{\Id,e_1\cl,e_2\cl,\vol\}$ is a complex basis of
$\operatorname{End}(\Sigma M)$ in an oriented orthonormal frame.
Matching the first-order terms with \eqref{eq:B-def} therefore gives
$a=-p$ and $b=q+2\eta$.  The vector term then vanishes precisely when
\[
\grad p\cl+\grad q\cl\vol
=(\grad p-J\grad q)\cl=0,
\]
which is \eqref{eq:general-CR}.  The remaining chiral coefficient is
$2\eta p$ and must equal $-2\eta H$, giving \eqref{eq:general-H}.
Finally, matching the scalar coefficient gives
\eqref{eq:general-algebraic}.  Conversely, these identities reproduce every
coefficient of $\Bop$.
\end{proof}

\begin{corollary}[Exhaustiveness in nonzero curvature]\label{cor:nonflat-exhaustive}
Assume $c\neq0$.  Then every monic scalar--chiral factorization is uniquely of the form
\begin{equation}\label{eq:factorization-ansatz}
\boxed{
\Bop
=\bigl(D+H+(\beta+2\eta)\vol\bigr)
 \bigl(D-H+\beta\vol\bigr)
}
\end{equation}
for a smooth $\beta:M\to\C$.  It exists if and only if
\begin{align}
\grad\beta&=J\grad H,\label{eq:beta-differential}\\
\beta(\beta+2\eta)+|A|^2-2c&=0.\label{eq:beta-algebraic}
\end{align}
\end{corollary}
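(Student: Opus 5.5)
The corollary follows from \Cref{thm:general-factorization} by substitution. Assume $c\neq0$. Then $\eta\neq0$ by \eqref{eq:eta-c}, so \eqref{eq:general-H} forces $p=-H$ on all of $\Omega$.

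We then rename $q=:\beta$. Equations \eqref{eq:general-a} and \eqref{eq:general-b} determine the remaining coefficients as $a=H$ and $b=\beta+2\eta$. This reproduces exactly the factor pair in \eqref{eq:factorization-ansatz}. Uniqueness is automatic: the coefficient comparison in the proof of \Cref{thm:general-factorization} is carried out in the basis $\{\Id,e_1\cl,e_2\cl,\vol\}$ together with the independent principal symbols of $D$ and $\vol D$. Hence $a,b,p,q$, and therefore $\beta=q$, are uniquely determined by the operator identity.

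Next we translate the two remaining conditions.
\begin{itemize}
\item The Cauchy--Riemann condition \eqref{eq:general-CR} reads $-\grad H=J\grad\beta$. Applying $J$ and using $J^2=-\Id$ gives $\grad H=-J\grad\beta$, equivalently $J\grad H=\grad\beta$, which is \eqref{eq:beta-differential}.
\item The algebraic relation \eqref{eq:general-algebraic} becomes $H^2+\beta(\beta+2\eta)=H^2+2c-|A|^2$. Cancelling $H^2$ gives \eqref{eq:beta-algebraic}.
\end{itemize}

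For the converse, suppose a smooth $\beta$ satisfies \eqref{eq:beta-differential}--\eqref{eq:beta-algebraic}. Set $p=-H$, $q=\beta$, $a=H$, $b=\beta+2\eta$. All five conditions of \Cref{thm:general-factorization} then hold, with \eqref{eq:general-H} holding trivially, so the identity \eqref{eq:factorization-ansatz} follows.

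The only delicate point is bookkeeping. Since $\beta$ is complex-valued, $J\grad$ must be read as acting $\C$-linearly on complexified gradients, and the sign conventions for $J$ from \eqref{eq:Jomega} must match those used in \eqref{eq:general-CR}. Once that is fixed, the argument is purely algebraic.
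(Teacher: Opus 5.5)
Your derivation is correct and follows the paper's own proof. Since $4\eta^2=c\neq0$, \eqref{eq:general-H} forces $p=-H$, and setting $q=\beta$ turns \eqref{eq:general-a}--\eqref{eq:general-algebraic} into \eqref{eq:factorization-ansatz}--\eqref{eq:beta-algebraic}. Your translations of the Cauchy--Riemann condition and the algebraic condition are right, and so is the converse.

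The one step to fix is your justification of uniqueness. Comparing the product with $\Bop$ does \emph{not} determine $a,b,p,q$. It only fixes $a+p$, $b-q$, $ap-bq$, $aq+bp$ and $\grad p-J\grad q$, and these do not pin down $q$. When $H$ and $|A|^2$ are constant, either root of \eqref{eq:beta-quadratic} gives a factorization. For example, the proper small sphere has the two distinct factorizations \eqref{eq:small-factor-general-1} and \eqref{eq:small-factor-general-2} of the same operator, with $\beta=0$ and $\beta=-\sqrt c$. So, read literally, your claim that the operator identity determines the coefficients is false.

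The word ``uniquely'' in the statement only concerns how a \emph{given} factorization is written. Once $p=-H$ is forced, $\beta$ must equal the $\vol$-coefficient $q$ of the given right-hand factor. This is unique simply because $\Id$ and $\vol$ are linearly independent in $\operatorname{End}(\Sigma M)$. With that one-line replacement your proof is complete.
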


\begin{proof}
Since $4\eta^2=c\neq0$, Equation \eqref{eq:general-H} gives $p=-H$.
Set $q=\beta$ in \Cref{thm:general-factorization}; then
\eqref{eq:general-a}--\eqref{eq:general-algebraic} become exactly
\eqref{eq:factorization-ansatz}--\eqref{eq:beta-algebraic}.
\end{proof}

\begin{proposition}[Exceptional Euclidean factors]\label{prop:flat-exceptional}
For $c=0$ and $\eta=0$, every monic scalar--chiral factorization has the form
\begin{equation}\label{eq:flat-general-factorization}
\boxed{
\mathscr B_0=(D-p+q\vol)(D+p+q\vol)
}
\end{equation}
with
\begin{equation}\label{eq:flat-factor-system}
\grad p=J\grad q,
\qquad
p^2+q^2=H^2-|A|^2.
\end{equation}
In a conformal coordinate $z=x+iy$ satisfying $J\partial_x=\partial_y$, the functions
\begin{equation}\label{eq:rt-flat}
r:=p-iq,
\qquad
t:=p+iq
\end{equation}
are respectively holomorphic and antiholomorphic and satisfy
\begin{equation}\label{eq:rt-product}
rt=H^2-|A|^2.
\end{equation}
On a simply connected region where $A\neq0$, such a factorization exists if and only if
\begin{equation}\label{eq:flat-log-harmonic}
\Delta\log(|A|^2-H^2)=0.
\end{equation}
On a connected planar region $A=0$, one of $r,t$ vanishes identically while the other is an arbitrary holomorphic or antiholomorphic function.
\end{proposition}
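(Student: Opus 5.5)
We specialize \Cref{thm:general-factorization} to $c=0$, $\eta=0$. Then \eqref{eq:general-H} is vacuous, \eqref{eq:general-a} and \eqref{eq:general-b} become $a=-p$ and $b=q$, and \eqref{eq:general-algebraic} becomes $p^2+q^2=H^2-|A|^2$. Substituting into \eqref{eq:general-factorization} gives exactly \eqref{eq:flat-general-factorization}. Together with \eqref{eq:general-CR}, this gives \eqref{eq:flat-factor-system}. Since \Cref{thm:general-factorization} is an equivalence, every monic scalar--chiral factorization of $\mathscr B_0$ is of this form, and conversely every solution of \eqref{eq:flat-factor-system} produces one.

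Next comes the complex-analytic translation. In a conformal coordinate the metric is $e^{2u}(dx^2+dy^2)$, so $J\partial_x=\partial_y$ and gradients are conformally rescaled. Hence $\grad p=J\grad q$ reads $(p_x,p_y)=(-q_y,q_x)$, that is, $p_x=-q_y$ and $p_y=q_x$. The conformal factor cancels from both sides, so this holds even though $p,q$ are complex-valued. With $\partial_{\bar z}=\tfrac12(\partial_x+i\partial_y)$ one computes
\[
2\partial_{\bar z}(p-iq)=(p_x+q_y)+i(p_y-q_x)=0,
\qquad
2\partial_z(p+iq)=(p_x+q_y)+i(q_x-p_y)=0,
\]
so $r$ is holomorphic and $t$ is antiholomorphic. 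Moreover $rt=p^2+q^2=H^2-|A|^2$. The correspondence is invertible through $p=(r+t)/2$ and $q=(t-r)/(2i)$, so solving \eqref{eq:flat-factor-system} is the same as finding a holomorphic $r$ and an antiholomorphic $t$ with \eqref{eq:rt-product}. The step I expect to need the most care is exactly this sign bookkeeping, namely matching $J$, the orientation of $\vol$ and the choice of conformal coordinate. I would check it against \eqref{eq:Jomega} before continuing.

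For the criterion where $A\neq0$, note that
\[
|A|^2-H^2=k_1^2+k_2^2-\tfrac14(k_1+k_2)^2\ge\tfrac12(k_1^2+k_2^2)>0,
\]
so $rt=-(|A|^2-H^2)$ is nowhere zero, and hence both $r$ and $t$ are nonvanishing. On a simply connected region we may take logarithms. Then $\log(|A|^2-H^2)=\log|r|+\log|t|$ is a sum of the real part of a holomorphic function and the real part of an antiholomorphic one, so it is harmonic. Since the Laplacian in two dimensions is conformally covariant, this gives \eqref{eq:flat-log-harmonic} for the metric $g$ as well.

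Conversely, suppose $h=\log(|A|^2-H^2)$ is harmonic on a simply connected region. Write $h=F+\overline F$ with $F$ holomorphic, which is possible because $h=\operatorname{Re}(2F)$ for a holomorphic $F$ on simply connected domains. Set $r=-e^{F}$ and $t=e^{\overline F}$. Then $rt=-e^h=H^2-|A|^2$, and recovering $p,q$ as above yields the factorization.

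Finally, on a connected region where $A=0$ we have $H=0$, so $rt\equiv0$. If $r\not\equiv0$, its zeros are isolated, so $t$ vanishes on a dense set and therefore $t\equiv0$; the symmetric argument applies if $t\not\equiv0$. Thus one of $r,t$ vanishes identically. The other is then arbitrary in its class, because \eqref{eq:rt-product} imposes no further condition.
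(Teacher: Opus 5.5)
Your proposal is correct and follows the paper's proof essentially step by step: the specialization of \Cref{thm:general-factorization}, the Cauchy--Riemann translation for $r=p-iq$ and $t=p+iq$, the converse construction (your $r=-e^{F}$, $t=e^{\overline F}$ is exactly the paper's choice $t=-\overline r$ with $|r|^2=|A|^2-H^2$), and the dichotomy $rt\equiv0$ on planar regions, which you justify more explicitly via isolated zeros. The only deviation is in the forward direction: you get harmonicity directly from $\log(|A|^2-H^2)=\log|r|+\log|t|$, whereas the paper first shows that $r/\overline t$ is a real negative constant $\kappa$, which gives the extra rigidity $r=\kappa\overline t$ but is not needed for the criterion. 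Your sign worry is already settled, since \eqref{eq:Jomega} is used inside \Cref{thm:general-factorization} to produce $\grad p=J\grad q$.
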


\begin{proof}
The first statement is \Cref{thm:general-factorization} with $\eta=c=0$.
Equation \eqref{eq:general-CR} is the Cauchy--Riemann system
$r_{\bar z}=0$ and $t_z=0$, while the algebraic equation gives
\eqref{eq:rt-product}.

Put $W=|A|^2-H^2$.  The inequality $H^2\leq |A|^2/2$ shows that
$W>0$ wherever $A\neq0$.  On such a simply connected region, $rt=-W$ is real and nonzero.  Since $r/\overline t$ is holomorphic and equals its complex conjugate, it is a real constant $\kappa$.  Hence
$r=\kappa\overline t$ with $\kappa<0$, and
$W=(-1/\kappa)|r|^2$; therefore \eqref{eq:flat-log-harmonic} holds.
Conversely, if \eqref{eq:flat-log-harmonic} holds, then locally
$W=|r|^2$ for a nowhere-zero holomorphic function $r$.  Taking
$t=-\overline r$ and recovering $p,q$ from \eqref{eq:rt-flat} gives
\eqref{eq:flat-factor-system}.  If $A=0$ on an open set, then
$rt=0$ identically.  On each connected component, either
$r\equiv0$ and $t$ is an arbitrary antiholomorphic function, or
$t\equiv0$ and $r$ is an arbitrary holomorphic function.
\end{proof}

\begin{definition}[Mean-curvature-normalized factorization]\label{def:normalized-factorization}
A monic scalar--chiral factorization is \emph{mean-curvature-normalized} if its right scalar coefficient is $p=-H$, equivalently if it has the form \eqref{eq:factorization-ansatz}.  By \Cref{cor:nonflat-exhaustive}, this normalization is automatic when $c\neq0$; in the Euclidean case it selects the geometric branch governed by the Dirac discriminant from the larger family in \Cref{prop:flat-exceptional}.
\end{definition}

\begin{theorem}[Normalized factorization criterion]\label{thm:factorization}
For arbitrary $c$, the mean-curvature-normalized identity \eqref{eq:factorization-ansatz} holds if and only if \eqref{eq:beta-differential}--\eqref{eq:beta-algebraic} hold.
\end{theorem}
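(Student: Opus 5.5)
The plan is to specialize \Cref{thm:general-factorization} to the normalized branch and check that nothing is lost when $c=0$. By \Cref{def:normalized-factorization}, the identity \eqref{eq:factorization-ansatz} is the case $p=-H$, $q=\beta$, $a=H$, $b=\beta+2\eta$ of \eqref{eq:general-factorization}. Since \Cref{thm:general-factorization} is an ``if and only if'' statement valid for arbitrary $c$ (its proof matches coefficients on the full complex spinor bundle, using independence of the principal symbols of $D$ and $\vol D$ and the basis $\{\Id,e_1\cl,e_2\cl,\vol\}$ of $\operatorname{End}(\Sigma M)$), the normalized identity holds exactly when the five relations \eqref{eq:general-a}--\eqref{eq:general-algebraic} hold with these substitutions.

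I will then check each relation in turn. Relations \eqref{eq:general-a} and \eqref{eq:general-b} reduce to $H=-(-H)$ and $\beta+2\eta=\beta+2\eta$, which are identities. Relation \eqref{eq:general-H} becomes $2\eta(-H+H)=0$, also automatic. In particular, this holds for $c\neq0$ as well as for $\eta=0$. Relation \eqref{eq:general-CR} reads $-\grad H=J\grad\beta$. Applying $J$ and using $J^2=-\Id$, this is equivalent to $\grad\beta=J\grad H$, which is \eqref{eq:beta-differential}. Relation \eqref{eq:general-algebraic} becomes $H^2+\beta(\beta+2\eta)=H^2+2c-|A|^2$, which is \eqref{eq:beta-algebraic}.

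The only point needing care is to confirm that no step used $c\neq0$. The division by $\eta$ in \Cref{cor:nonflat-exhaustive} is exactly what the normalization makes unnecessary. Once the ansatz fixes $p=-H$, the chiral-coefficient condition is vacuous. The Euclidean case is therefore covered by the same argument, and this is the branch selected from \Cref{prop:flat-exceptional}. A second small point is the direction of the $J$-rotation in the Cauchy--Riemann relation. I would double-check it against \eqref{eq:Jomega}: $\grad q\cl\vol=-J\grad q\cl$, so the vector term in the expanded product is $(\grad p-J\grad q)\cl$, consistent with \eqref{eq:general-CR}. With $p=-H$, the vanishing of this term is equivalent to \eqref{eq:beta-differential}. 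I expect no genuine obstacle: the proof is a direct substitution into \Cref{thm:general-factorization}.
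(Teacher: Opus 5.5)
Your proposal is correct and matches the paper's proof: the paper also obtains the criterion as the specialization $p=-H$, $q=\beta$ of \Cref{thm:general-factorization}. Under that substitution \eqref{eq:general-a}, \eqref{eq:general-b} and \eqref{eq:general-H} hold identically, while \eqref{eq:general-CR} and \eqref{eq:general-algebraic} become \eqref{eq:beta-differential} and \eqref{eq:beta-algebraic}; your checks of the $J$-rotation sign and that no step uses $c\neq0$ just make this explicit.
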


\begin{proof}
This is the specialization $p=-H$ and $q=\beta$ of
\Cref{thm:general-factorization}.
\end{proof}

\begin{corollary}[Harmonicity obstruction]\label{cor:harmonic-H}
If the normalized factorization \eqref{eq:factorization-ansatz} holds on an open set, then
\begin{equation}\label{eq:H-harmonic}
\Delta H=0
\end{equation}
on that open set.  Conversely, on a simply connected open set where $H$ is harmonic, the differential equation \eqref{eq:beta-differential} has a local solution $\beta$, unique up to an additive constant; factorization then reduces to the algebraic constraint \eqref{eq:beta-algebraic}.
\end{corollary}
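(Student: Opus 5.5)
The forward direction follows from the differential constraint \eqref{eq:beta-differential} by taking a divergence. By \Cref{thm:factorization}, the normalized identity \eqref{eq:factorization-ansatz} on an open set gives $\grad\beta=J\grad H$ there. Since $J$ is rotation by $\pi/2$ for the induced metric, we have $\diver(J\grad H)=0$ for any smooth function $H$. Indeed, $J\grad H$ is the Hamiltonian (skew-gradient) field of $H$; equivalently, in terms of forms, $\diver(JX)$ is, up to sign, $\ast d(X^\flat)$, and $d(dH)=0$. One can also check this in a local geodesic frame using $J\nabla=\nabla J$, which holds because $J$ is parallel on an oriented surface.

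Next, apply $J$ to both sides of $\grad\beta=J\grad H$ and use $J^2=-\Id$. This gives $\grad H=-J\grad\beta$, which holds for the real and imaginary parts of $\beta$ separately, since $J$ is real. Taking divergence, with $\diver(J\grad\beta)=0$ by the same argument, yields $\Delta H=\diver\grad H=0$.

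For the converse, assume $H$ is harmonic on a simply connected open set $\Omega$. The condition $\grad\beta=J\grad H$ is equivalent to the $1$-form identity $d\beta=(J\grad H)^\flat=-\ast dH$ (up to a fixed sign convention for the Hodge star). The closedness of $\ast dH$ is exactly $d\ast dH=(\Delta H)\,\vol_g=0$. By the Poincaré lemma on the simply connected set $\Omega$, there is a primitive $\beta$, which is real-valued and unique up to an additive constant, since $d\beta=0$ forces $\beta$ to be constant on connected $\Omega$. In a conformal coordinate this is simply the existence of a harmonic conjugate, making $H+i\beta$ holomorphic (or antiholomorphic, depending on orientation).

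Finally, once $\beta$ solves \eqref{eq:beta-differential}, \Cref{thm:factorization} shows that \eqref{eq:factorization-ansatz} holds if and only if \eqref{eq:beta-algebraic} holds. The only remaining freedom is the additive constant, possibly complex if we allow $\beta\in C^\infty(\Omega,\C)$. So factorization reduces to the algebraic constraint, as claimed.

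The only point requiring care is the sign and convention check that $\diver(JX)=-\operatorname{curl}X$, so that $\diver J\grad f=0$ with the paper's orientation. I would verify this once in a positively oriented geodesic frame. No real obstacle is expected.
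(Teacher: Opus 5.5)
Your proof is correct and is essentially the paper's argument. The paper observes that the one-form dual to $J\grad H$ is, up to the orientation sign, $\ast\dd H$, which is closed exactly when $\Delta H=0$; so $\dd\beta=\pm\ast\dd H$ forces harmonicity, and the Poincar\'e lemma gives the converse. Your forward step (rotate by $J$ and use $\diver(J\grad\beta)=0$) is just the vector-field form of $\dd(\dd\beta)=0$, and your treatment of the additive constant and of the reduction to \eqref{eq:beta-algebraic} via \Cref{thm:factorization} matches the paper.
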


\begin{proof}
The one-form metrically dual to $J\grad H$ is, up to the fixed orientation sign, the Hodge dual of $\dd H$.  It is closed if and only if $\Delta H=0$.  Since \eqref{eq:beta-differential} asserts that this one-form is $\dd\beta$, harmonicity is necessary.  The local converse follows from the Poincar\'e lemma.
\end{proof}

Differentiating the algebraic condition yields an additional compatibility equation.

\begin{corollary}[Differential Riccati compatibility]\label{cor:Riccati-diff}
Under the hypotheses of \Cref{thm:factorization},
\begin{equation}\label{eq:dA2}
\dd |A|^2=-2(\beta+\eta)\dd\beta.
\end{equation}
Equivalently, after using \eqref{eq:beta-differential}, the gradients of $|A|^2$ and $H$ are related by a complex quarter-turn law.
\end{corollary}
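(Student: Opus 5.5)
The plan is to differentiate the algebraic constraint \eqref{eq:beta-algebraic} directly. By \Cref{thm:factorization}, on the open set where the normalized identity \eqref{eq:factorization-ansatz} holds, both \eqref{eq:beta-differential} and \eqref{eq:beta-algebraic} are available. Since $\beta$ is smooth and complex-valued, and $\eta$ and $c$ are constants, taking the exterior derivative of \eqref{eq:beta-algebraic} gives
\[
(\beta+2\eta)\dd\beta+\beta\dd\beta+\dd|A|^2=0 .
\]
This is exactly $\dd|A|^2=-2(\beta+\eta)\dd\beta$, i.e.\ \eqref{eq:dA2}. The only point to check is that the product rule applies to complex-valued functions, which is immediate.

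For the equivalent ``quarter-turn'' formulation, pass to gradients. Here $\grad$ of a complex function is understood componentwise, as in \eqref{eq:beta-differential}, with $J$ extended complex-linearly. Substituting $\grad\beta=J\grad H$ into the gradient form of \eqref{eq:dA2} yields
\[
\grad|A|^2=-2(\beta+\eta)\,J\grad H .
\]
This says that $\grad|A|^2$ is obtained from $\grad H$ by a quarter turn followed by multiplication by the complex scalar $-2(\beta+\eta)$. Equivalently, setting $s=\beta+\eta$, one has $\grad s=J\grad H$ and $\grad|A|^2=-2sJ\grad H$. This is the form that anticipates the discriminant relation \eqref{eq:intro-discriminant}.

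I expect no genuine obstacle. The only care needed is to keep $\eta$ constant, which holds because $4\eta^2=c$ is a constant, and to state the gradient version with the complexified $J$. One could also remark that, since $|A|^2$ is real, taking real and imaginary parts of the last display yields two real equations. I would mention this only as a consequence and not need it for the proof.
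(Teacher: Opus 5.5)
Your proposal is correct and is essentially the paper's argument: the paper's only justification is the remark that differentiating the algebraic condition \eqref{eq:beta-algebraic}, with $\eta$ and $c$ constant, gives \eqref{eq:dA2}. Your substitution of $\grad\beta=J\grad H$, giving $\grad|A|^2=-2(\beta+\eta)J\grad H$, is exactly the quarter-turn law that the paper later records as \eqref{eq:gradA2-disc} with $s=\beta+\eta$.
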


\begin{remark}
The pair \eqref{eq:beta-differential}--\eqref{eq:beta-algebraic} is a geometric Riccati system.  It is substantially more restrictive than harmonicity of $H$ alone: a harmonic conjugate of $H$ must also lie pointwise on the quadratic spectral curve determined by $|A|^2$, $c$, and $\eta$.
\end{remark}

\begin{corollary}[CMC criterion]\label{cor:CMC}
Assume $M$ is connected and $H$ is constant.  A factorization of the form \eqref{eq:factorization-ansatz} exists if and only if $|A|^2$ is constant.  In that case $\beta$ is constant and satisfies
\begin{equation}\label{eq:beta-quadratic}
\beta^2+2\eta\beta+|A|^2-2c=0,
\end{equation}
so
\begin{equation}\label{eq:beta-roots}
\boxed{
\beta=-\eta\pm\frac12\sqrt{9c-4|A|^2}.
}
\end{equation}
\end{corollary}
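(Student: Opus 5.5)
The argument rests on \Cref{thm:factorization}: the normalized identity \eqref{eq:factorization-ansatz} is equivalent to the pair \eqref{eq:beta-differential}--\eqref{eq:beta-algebraic}. We therefore only need to analyse this pair when $H$ is constant on the connected surface $M$.

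For necessity, suppose the factorization holds. Since $\grad H=0$, \eqref{eq:beta-differential} gives $\grad\beta=0$. Because $M$ is connected, $\beta$ is a complex constant. The algebraic relation \eqref{eq:beta-algebraic} then reads $|A|^2=2c-\beta(\beta+2\eta)$. This is constant; it is automatically real, since the left side is real. One could reach the same conclusion from \Cref{cor:Riccati-diff}, because $\dd\beta=0$ forces $\dd|A|^2=0$. The quadratic \eqref{eq:beta-quadratic} is just \eqref{eq:beta-algebraic} rewritten.

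For sufficiency, suppose $|A|^2$ is constant. Let $\beta$ be a root of \eqref{eq:beta-quadratic}, taken in $\C$ (possibly nonreal). This constant $\beta$ satisfies \eqref{eq:beta-algebraic}. It also satisfies \eqref{eq:beta-differential} trivially, since both sides vanish. By \Cref{thm:factorization}, the factorization \eqref{eq:factorization-ansatz} holds.

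To get the explicit roots, complete the square: $(\beta+\eta)^2=\eta^2-|A|^2+2c$. Using $4\eta^2=c$, this equals $\tfrac14(9c-4|A|^2)$. Hence $\beta=-\eta\pm\tfrac12\sqrt{9c-4|A|^2}$, which is \eqref{eq:beta-roots}. Here the square root denotes either complex determination; the $\pm$ already covers both choices. Nothing in the argument is a real obstacle. The only point needing care is that $\beta$ is allowed to be complex-valued, so the factorization exists even when the discriminant $9c-4|A|^2$ is negative. The constancy of $\beta$ also depends on connectedness, which is assumed.
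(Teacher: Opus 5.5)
Your proposal is correct and matches the paper's own argument. Via \Cref{thm:factorization}, constancy of $H$ turns \eqref{eq:beta-differential} into $\grad\beta=0$; connectedness then makes $\beta$ constant, so \eqref{eq:beta-algebraic} forces $|A|^2$ to be constant. For the converse you take a constant, possibly complex, root, and completing the square with $4\eta^2=c$ yields \eqref{eq:beta-roots}.
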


\begin{proof}
When $H$ is constant, \eqref{eq:beta-differential} gives $\grad\beta=0$, so $\beta$ is constant.  Then \eqref{eq:beta-algebraic} forces $|A|^2$ to be constant.  The converse is immediate, and solving the quadratic gives \eqref{eq:beta-roots} because $4\eta^2=c$.
\end{proof}

\section{The Dirac discriminant and rigidity}\label{sec:discriminant}

The coefficient equations become geometrically transparent after one shift of the chiral parameter.  The resulting field is a harmonic conjugate of the mean curvature, while its square is determined by $|A|^2$ and the ambient curvature.  This converts factorization into a holomorphic-data problem and immediately rules out most curvature signs.

We begin with the shifted chiral mass.

\begin{definition}[Dirac discriminant]\label{def:discriminant}
Suppose that the normalized factorization \eqref{eq:factorization-ansatz} holds on a connected open set.  Define
\begin{equation}\label{eq:s-def}
s:=\beta+\eta
\end{equation}
and
\begin{equation}\label{eq:disc-def}
\boxed{\Disc:=9c-4|A|^2.}
\end{equation}
\end{definition}

\begin{theorem}[Dirac-discriminant structure]\label{thm:disc-structure}
On every factorizing chart,
\begin{equation}\label{eq:s-system}
\boxed{
\grad s=J\grad H,
\qquad
s^2=\frac{\Disc}{4}=\frac{9c}{4}-|A|^2.
}
\end{equation}
Consequently, $H$ and $s$ are harmonic.  If $H$ is nonconstant, then $s$ is real-valued, $c>0$, and
\begin{equation}\label{eq:real-chamber}
|A|^2\leq\frac{9c}{4}.
\end{equation}
In particular, there is no non-CMC mean-curvature-normalized factorization in $\R^3$ or $\Hyp^3$; by \Cref{cor:nonflat-exhaustive}, the hyperbolic statement applies to every monic scalar--chiral factorization.
\end{theorem}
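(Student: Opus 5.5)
The plan is to rewrite the two factorization conditions of \Cref{thm:factorization} in terms of the shifted coefficient $s$. After that, a reality argument on $s$ gives everything else.

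\emph{Step 1: the system \eqref{eq:s-system}.} Since $\eta$ is constant, $\grad s=\grad\beta$, and \eqref{eq:beta-differential} gives $\grad s=J\grad H$ directly. For the algebraic relation, write $\beta=s-\eta$, so that $\beta(\beta+2\eta)=(s-\eta)(s+\eta)=s^2-\eta^2=s^2-c/4$ by \eqref{eq:eta-c}. Substituting into \eqref{eq:beta-algebraic} yields $s^2=2c+c/4-|A|^2=9c/4-|A|^2=\Disc/4$.

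\emph{Step 2: harmonicity.} Harmonicity of $H$ is \Cref{cor:harmonic-H}. For $s$, note that $\Delta s=\diver(J\grad H)$. This vanishes identically, because in a local conformal coordinate $J\grad H$ is the rotated gradient, whose divergence is the curl of a gradient. Alternatively, apply the same Hodge-star argument to $\dd H$.

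\emph{Step 3: reality of $s$ when $H$ is nonconstant.} This is the only step requiring care. Split $s=u+iv$ with $u,v$ real. Since $H$ is real, $J\grad H$ is a real vector field, so $\grad v=0$ and $v$ is a constant on the connected chart. Since $|A|^2$ and $c$ are real, $s^2$ is real, which forces $uv\equiv0$. If $H$ is nonconstant, then $\grad u=J\grad H$ is not identically zero. Because $u$ is harmonic and hence real-analytic, $u$ is nonconstant and therefore nonzero on a nonempty open set. There $v=0$, and since $v$ is constant, $v\equiv0$. So $s$ is real.

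\emph{Step 4: the curvature sign.} With $s$ real, $s^2\ge0$ gives \eqref{eq:real-chamber}. If $c\le0$, then $|A|^2\le 9c/4\le0$ forces $A\equiv0$, hence $H\equiv0$, contradicting nonconstancy; therefore $c>0$. This excludes non-CMC normalized factorizations for $c=0$ ($\R^3$) and $c<0$ ($\Hyp^3$). In the hyperbolic case, every monic scalar--chiral factorization is normalized by \Cref{cor:nonflat-exhaustive}, so the exclusion covers all of them.

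The main obstacle is the reality argument in Step 3. It needs connectedness of the chart, so that the constant $v$ propagates, and real-analyticity of $u$, so that $\grad u\not\equiv0$ implies $u\neq0$ on an open set. Once those are in place, the remaining steps are direct algebra.
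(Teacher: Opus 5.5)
Your proposal is correct and follows essentially the same route as the paper. The shift $\beta=s-\eta$ with $\eta^2=c/4$ gives the system, harmonicity comes from the Hodge-dual (equivalently, rotated-gradient divergence) argument, and your reality step is just the contrapositive of the paper's observation that a nonzero constant $\Im s$ forces $\Re s\equiv0$ and hence $H$ constant; note only that real-analyticity of $\Re s$ is unnecessary there, since a single point where $\Re s\neq0$ already forces the constant $\Im s$ to vanish.
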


\begin{proof}
The differential equation follows from \eqref{eq:beta-differential}, since $\eta$ is constant.  The algebraic equation follows from \eqref{eq:beta-algebraic} and $\eta^2=c/4$:
\[
s^2=\beta(\beta+2\eta)+\eta^2=2c-|A|^2+\frac c4.
\]
The one-form dual to $J\grad H$ is, up to the orientation convention, $*\dd H$.  Since $\dd s=*\dd H$, applying $\dd$ gives $\Delta H=0$; applying the same argument after rotating once more gives $\Delta s=0$.

Write $s=a+ib$.  Since $\grad s=J\grad H$ is real, $b$ is constant.  The second equation in \eqref{eq:s-system} is real, so $2ab=0$.  If $b\neq0$, then $a=0$, hence $s$ is constant and $H$ is constant.  Thus a non-CMC branch has $b=0$.  The inequality and the exclusion of $c\leq0$ follow immediately from $s^2=9c/4-|A|^2$.
\end{proof}

\begin{corollary}[Holomorphic discriminant datum]\label{cor:holomorphic-datum}
On a non-CMC factorizing chart, the complex-valued function
\begin{equation}\label{eq:F-holo}
\boxed{\mathcal F:=H+is}
\end{equation}
is holomorphic, after fixing the orientation so that $J\partial_x=\partial_y$ in a conformal coordinate.  Moreover,
\begin{align}
|A^\circ|^2&=\frac{9c}{4}-s^2-2H^2,\label{eq:A0-disc}\\
K&=2H^2+\frac12s^2-\frac c8.\label{eq:K-disc}
\end{align}
\end{corollary}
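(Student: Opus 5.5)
The plan is to work in a local conformal coordinate and read off the Cauchy--Riemann equations from the first equation of \eqref{eq:s-system}. The two curvature identities will then follow from the second equation of \eqref{eq:s-system} and the Gauss equation. Throughout I use \Cref{thm:disc-structure}: on a non-CMC factorizing chart $s$ is real-valued, so $\mathcal F=H+is$ has real part $H$ and imaginary part $s$.

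\textbf{Holomorphy.} Choose a conformal coordinate $z=x+iy$ with $g=\lambda^2(\dd x^2+\dd y^2)$, oriented so that $J\partial_x=\partial_y$, and hence $J\partial_y=-\partial_x$. Then
\[
\grad H=\lambda^{-2}(H_x\partial_x+H_y\partial_y),
\qquad
J\grad H=\lambda^{-2}(-H_y\partial_x+H_x\partial_y).
\]
Comparing this with $\grad s=\lambda^{-2}(s_x\partial_x+s_y\partial_y)$, the equation $\grad s=J\grad H$ becomes
\[
s_x=-H_y,\qquad s_y=H_x.
\]
These are exactly the Cauchy--Riemann equations for $H+is$, so $\mathcal F_{\bar z}=0$. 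The conformal factor drops out, which is why only the orientation needs to be fixed. Reversing the orientation would make $H-is$ holomorphic instead.

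\textbf{Traceless part.} Write $A=A^\circ+H\Id$. Then
\[
|A|^2=|A^\circ|^2+2H^2 .
\]
Substituting $|A|^2=\tfrac{9c}{4}-s^2$ from \eqref{eq:s-system} gives \eqref{eq:A0-disc}.

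\textbf{Gauss curvature.} The Gauss equation in $N^3(c)$ reads $K=c+\det A$. Since $\det A=H^2-\tfrac12|A^\circ|^2$, inserting \eqref{eq:A0-disc} yields
\[
K=c+H^2-\tfrac98c+\tfrac12 s^2+H^2=2H^2+\tfrac12s^2-\tfrac c8,
\]
which is \eqref{eq:K-disc}.

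\textbf{Main obstacle.} The only delicate point is sign bookkeeping. One must check that the paper's conventions for $J$ and for $\omega$ in \eqref{eq:Jomega} produce the holomorphic combination $H+is$ rather than $H-is$. The computation above settles this under the stated normalization $J\partial_x=\partial_y$. The reality of $s$ is essential here: without it, $\mathcal F$ would not split into real and imaginary parts $H$ and $s$.
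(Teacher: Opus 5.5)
Your proof is correct and takes the same approach as the paper: you read $\grad s=J\grad H$ as the Cauchy--Riemann system for $H+is$, and your explicit coordinate computation confirms that the sign gives $H+is$. You then combine $|A|^2=|A^\circ|^2+2H^2$ with $s^2=\tfrac{9c}{4}-|A|^2$, and finish with the Gauss equation $K=c+\det A$, using $\det A=H^2-\tfrac12|A^\circ|^2$, which is the same as the paper's $2H^2-\tfrac12|A|^2$.
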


\begin{proof}
The equation $\grad s=J\grad H$ is the Cauchy--Riemann system for $H+is$.  Since $|A^\circ|^2=|A|^2-2H^2$, \eqref{eq:A0-disc} follows from \eqref{eq:s-system}.  Finally,
\[
\det A=2H^2-\frac12|A|^2
\]
and the Gauss equation $K=c+\det A$ gives \eqref{eq:K-disc}.
\end{proof}

\begin{proposition}[Differential discriminant identities]\label{prop:disc-identities}
On a non-CMC factorizing chart,
\begin{align}
\grad|A|^2&=-2sJ\grad H,\label{eq:gradA2-disc}\\
\langle\grad|A|^2,\grad H\rangle&=0,\label{eq:orth-disc}\\
\Delta|A|^2&=-2|\grad H|^2,\label{eq:lapA2-disc}\\
\Delta\Disc&=8|\grad H|^2,\label{eq:lapDisc}\\
|\grad\Disc|^2&=2\Disc\,\Delta\Disc.\label{eq:disc-transnormal}
\end{align}
\end{proposition}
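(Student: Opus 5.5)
Everything will follow from the two relations in \Cref{thm:disc-structure}, namely $\grad s=J\grad H$ and $s^2=\tfrac{9c}{4}-|A|^2$. On a non-CMC chart \Cref{thm:disc-structure} also gives that $s$ is real and $c>0$, so all quantities below are real and the usual real calculus applies.

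\textbf{First two identities.} Taking the gradient of the algebraic relation gives $\grad|A|^2=-2s\,\grad s$. Substituting $\grad s=J\grad H$ yields \eqref{eq:gradA2-disc}; equivalently this is \eqref{eq:dA2} with $\beta+\eta=s$. Since $J$ is a pointwise rotation by $\pi/2$, we have $\langle J\grad H,\grad H\rangle=0$, and \eqref{eq:orth-disc} follows at once.

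\textbf{Laplacian identities.} For \eqref{eq:lapA2-disc}, I would apply $\Delta$ to $|A|^2=\tfrac{9c}{4}-s^2$ rather than take the divergence of $sJ\grad H$. This gives
\[
\Delta|A|^2=-2s\Delta s-2|\grad s|^2 .
\]
Now $\Delta s=0$ by \Cref{thm:disc-structure}, and $|\grad s|=|J\grad H|=|\grad H|$ because $J$ is an isometry. Hence $\Delta|A|^2=-2|\grad H|^2$. Since $\Disc=9c-4|A|^2$ with $c$ constant, \eqref{eq:lapDisc} follows immediately: $\Delta\Disc=-4\Delta|A|^2=8|\grad H|^2$.

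\textbf{Transnormal identity.} Write $\Disc=4s^2$. Then $\grad\Disc=8s\grad s$, so
\[
|\grad\Disc|^2=64s^2|\grad H|^2 .
\]
On the other side, $2\Disc\,\Delta\Disc=2\cdot4s^2\cdot8|\grad H|^2=64s^2|\grad H|^2$. The two agree, which proves \eqref{eq:disc-transnormal}.

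\textbf{Where care is needed.} The only delicate step is justifying that $s$ is real, so that $|\grad s|^2$ and $s^2$ have their usual meanings and the manipulations above are legitimate. This is exactly the non-CMC conclusion of \Cref{thm:disc-structure}, which forces $\operatorname{Im}s=0$. The harmonicity of $s$, also supplied by \Cref{thm:disc-structure}, is what removes the $s\Delta s$ term. No Gauss--Codazzi input is required.
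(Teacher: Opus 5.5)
Your proposal is correct and is essentially the paper's proof. Both differentiate $|A|^2=\tfrac{9c}{4}-s^2$, substitute $\grad s=J\grad H$, use harmonicity of $s$ together with $|\grad s|=|\grad H|$ for the Laplacian identity, and then obtain the last two identities from $\Disc=4s^2$.
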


\begin{proof}
Differentiate $|A|^2=9c/4-s^2$ and use $\grad s=J\grad H$.  Since $s$ is harmonic and $|\grad s|=|\grad H|$,
\[
\Delta|A|^2=-\Delta(s^2)=-2|\grad s|^2.
\]
The last two identities follow from $\Disc=4s^2$.
\end{proof}

\begin{corollary}[Umbilic exclusion]\label{cor:no-umbilic}
A non-CMC mean-curvature-normalized factorizing surface has no umbilic points.
\end{corollary}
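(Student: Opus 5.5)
The plan is to work with the traceless part of the shape operator, using \Cref{cor:holomorphic-datum}. On a non-CMC factorizing chart, \eqref{eq:A0-disc} gives
\[
f:=|A^\circ|^2=\frac{9c}{4}-s^2-2H^2,
\]
where $s$ is real-valued by \Cref{thm:disc-structure}. This function satisfies $f\ge0$, and $f=0$ exactly at umbilic points. So it suffices to show that $f$ cannot vanish anywhere on a connected chart where $H$ is nonconstant. I will argue by contradiction with a minimum principle: suppose $f(p_0)=0$ at some point $p_0$, so that $f$ attains its minimum value $0$ there.

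First, compute $\Delta f$. By \Cref{thm:disc-structure}, both $H$ and $s$ are harmonic. Also $|\grad s|=|J\grad H|=|\grad H|$. Hence
\[
\Delta f=-2|\grad s|^2-4|\grad H|^2=-6|\grad H|^2\le0 .
\]
Equivalently, one can combine \eqref{eq:lapA2-disc} with $\Delta(H^2)=2|\grad H|^2$. Thus $f$ is a smooth, nonnegative superharmonic function on the connected chart, and it attains its interior minimum $0$ at $p_0$.

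By the strong minimum principle for superharmonic functions (in the Laplace--Beltrami setting, for instance via Hopf's lemma), $f\equiv0$ on the connected chart. Then $\Delta f\equiv0$, so $|\grad H|^2\equiv0$ and $H$ is constant on the chart. This contradicts the non-CMC hypothesis. Since $H$ is harmonic, hence real-analytic, it suffices that $H$ be constant on some open subset, so the argument also works if "non-CMC" is read as "$H$ is not locally constant".

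As a consistency check I would also record the first-order information. At a minimum point of $f$ the gradient vanishes:
\[
\grad f=-2sJ\grad H-4H\grad H=0 .
\]
Because $\grad H\perp J\grad H$ and the two vectors have equal length, this gives $(s^2+4H^2)|\grad H|^2=0$ at $p_0$. This shows the local picture is degenerate, but only the second-order argument gives a global contradiction. The main point needing care is the strong minimum principle step: we need $f$ smooth, the domain connected, and the sign of $\Delta f$ correct with respect to the convention $\Delta=\diver\grad$. All of these hold here.
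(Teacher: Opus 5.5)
Your proof is correct, but it takes a different route from the paper.

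The paper argues at first order. By \eqref{eq:A0-disc}, the holomorphic function $\mathcal F=H+is$ maps the chart into the closed ellipse $2(\Re\mathcal F)^2+(\Im\mathcal F)^2\le 9c/4$. Umbilic points are exactly the preimages of the boundary of this ellipse. The open mapping theorem for the nonconstant holomorphic $\mathcal F$ then forbids the image from touching that boundary.

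You argue at second order instead. You show that $|A^\circ|^2$ is a nonnegative superharmonic function with $\Delta|A^\circ|^2=-6|\grad H|^2$, and then apply the strong minimum principle. The sign convention $\Delta=\diver\grad$, the connectedness of the chart, and the reality of $s$ from \Cref{thm:disc-structure} are all used correctly.

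Each route buys something different:
\begin{itemize}
\item The paper's version is shorter and exploits the holomorphic datum directly.
\item Yours needs Hopf's maximum principle but relies on less of the structure. Since $\Delta f=-2|\grad s|^2-4|\grad H|^2$ is a sum of nonpositive terms, $\Delta f\equiv0$ already forces $\grad H=0$ without invoking $|\grad s|=|\grad H|$. You only need $H$ and $s$ to be real and harmonic, not the full Cauchy--Riemann coupling.
\item Your argument would therefore survive in settings where only such a differential inequality is available.
\item Your identity $\Delta|A^\circ|^2=-6|\grad H|^2$ is a small Simons-type formula on this branch, of some independent interest given the paper's outlook.
\end{itemize}

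The closing first-order consistency check is harmless but not needed for the argument.
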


\begin{proof}
By \eqref{eq:A0-disc}, the holomorphic map $\mathcal F=H+is$ takes values in the closed ellipse
\[
2(\Re\mathcal F)^2+(\Im\mathcal F)^2\leq\frac{9c}{4}.
\]
An umbilic point is a point where equality holds.  A nonconstant holomorphic map is open, and therefore cannot attain a boundary point of this ellipse while its image remains inside the ellipse.  Hence $|A^\circ|>0$ everywhere.
\end{proof}

\begin{proposition}[The factorization wall]\label{prop:wall}
On the real factorization branch, the repeated-root wall is
\[
\mathcal W=\{\Disc=0\}=\{s=0\}.
\]
At a regular wall point,
\begin{equation}\label{eq:hess-wall}
\operatorname{Hess}\Disc=8\,\dd s\otimes\dd s.
\end{equation}
Thus the discriminant has quadratic contact with zero and does not cross to a negative real-factorization chamber.
\end{proposition}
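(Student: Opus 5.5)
The argument works on the real factorization branch of \Cref{thm:disc-structure}, where $s$ is a real harmonic function with $\grad s=J\grad H$ and $\Disc=4s^2$. First I would identify the wall with the zero set of $s$. The repeated-root condition for the quadratic \eqref{eq:beta-quadratic} in $\beta$ says its discriminant $9c-4|A|^2=\Disc$ vanishes. Since $\Disc=4s^2$ pointwise, we get $\{\Disc=0\}=\{s=0\}$; this is the first assertion.

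Next I would compute the Hessian directly from $\Disc=4s^2$. The Leibniz rule gives
\[
\dd\Disc=8s\,\dd s,
\qquad
\operatorname{Hess}\Disc=8\,\dd s\otimes\dd s+8s\operatorname{Hess}s .
\]
At a wall point $s=0$, so the second term drops and \eqref{eq:hess-wall} follows. Regularity of the wall point means $\dd s\neq0$, which is equivalent to $\grad H\neq0$ because $|\grad s|=|\grad H|$. At such a point the wall is locally a smooth curve $\{s=0\}$ by the implicit function theorem. The Hessian is then positive semidefinite of rank one, degenerate along the wall and positive in the normal direction $\grad s$. This is the precise sense of "quadratic contact with zero".

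For the final assertion I would use that on the real branch $\Disc=4s^2\ge0$ identically. Hence no point near the wall can have $\Disc<0$ while remaining in a real factorization. Along a transversal curve $\gamma$ through the wall point, $\Disc\circ\gamma$ has a double zero with positive second derivative $8(\dd s(\dot\gamma))^2$. So $\Disc$ touches zero and returns to positive values rather than changing sign. A negative chamber would require $s$ to become purely imaginary, and \Cref{thm:disc-structure} rules that out on a non-CMC connected chart, since an imaginary part of $s$ must be constant and then forces CMC.

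The only delicate point is interpretive rather than computational. I need to fix what "regular wall point" means, namely $\dd s\neq0$ (equivalently $\grad H\neq0$), and note that the no-crossing statement relies on $s$ being real, which holds on the non-CMC branch by \Cref{thm:disc-structure}.
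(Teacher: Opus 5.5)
Your proposal is correct and follows the paper's argument: the paper's proof is simply to differentiate $\Disc=4s^2$ twice and restrict to $s=0$, which is your Leibniz computation $\operatorname{Hess}\Disc=8\,\dd s\otimes\dd s+8s\operatorname{Hess}s$. Your additional remarks make explicit what the paper leaves implicit, and they are sound. These are: reading the wall as the pointwise repeated-root condition of \eqref{eq:beta-algebraic}, taking regularity to mean $\dd s\neq0$ (equivalently $\grad H\neq0$), and deducing no crossing from $\Disc=4s^2\ge0$ once $s$ is real.
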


\begin{proof}
Differentiate $\Disc=4s^2$ twice and restrict to $s=0$.
\end{proof}

\begin{theorem}[Compact factorization rigidity]\label{thm:compact-rigidity}
Let $M$ be closed and connected.  If $\mathscr B_c$ admits a global mean-curvature-normalized factorization, then $H$, $s$, and $|A|^2$ are constant, and the principal curvatures are constant.  Conversely, a surface with constant principal curvatures admits a mean-curvature-normalized factorization over the complex spinor bundle, with
\begin{equation}\label{eq:beta-isoparametric}
\beta=-\eta\pm\frac12\sqrt{9c-4|A|^2}.
\end{equation}
\end{theorem}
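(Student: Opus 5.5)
The plan has two directions. For the forward direction we use the harmonicity of $H$ together with compactness; for the converse we invoke the CMC criterion.

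\emph{Forward direction.} Assume a global normalized factorization exists on the closed connected surface $M$. By \Cref{cor:harmonic-H}, $\Delta H=0$ globally. Since $H$ is a smooth harmonic function on a closed manifold, the maximum principle (or integrating $H\Delta H$ by parts) gives $\int_M|\grad H|^2=0$, so $H$ is constant. The relation $\grad\beta=J\grad H=0$ then makes $\beta$ constant, and $s=\beta+\eta$ is constant. Here the global hypothesis matters: $\beta$ is a globally defined coefficient of the factorization, so no multivaluedness of a harmonic conjugate arises. Next, \eqref{eq:beta-algebraic}, or equivalently $s^2=9c/4-|A|^2$ from \Cref{thm:disc-structure}, shows that $|A|^2$ is constant. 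Finally, the principal curvatures $k_1,k_2$ satisfy $k_1+k_2=2H$ and $k_1^2+k_2^2=|A|^2$. Both symmetric functions are constant, so the unordered pair $\{k_1,k_2\}$ is the constant root set of $x^2-2Hx+\tfrac12(4H^2-|A|^2)=0$. By continuity and connectedness, each principal curvature is then constant.

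\emph{Converse.} Suppose the principal curvatures are constant. Then $H$ and $|A|^2$ are constant. Choose $\beta$ as in \eqref{eq:beta-isoparametric}, using any complex square root of $9c-4|A|^2$; this constant is the reason we work over the complex spinor bundle, since the discriminant may be negative. Because $\beta$ is constant, \eqref{eq:beta-differential} holds trivially, as $\grad H=0$. Direct substitution shows that $\beta$ solves \eqref{eq:beta-quadratic}, which is \eqref{eq:beta-algebraic}, because $4\eta^2=c$. \Cref{thm:factorization}, or \Cref{cor:CMC}, then yields the identity \eqref{eq:factorization-ansatz} globally. The identity is pointwise in the coefficients, so it holds on all of $M$ once a spin structure is fixed.

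The only step that needs care is the global one: the constancy of $H$ on the closed surface. The rest is algebra already carried out in \Cref{cor:CMC}.
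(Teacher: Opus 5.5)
Your proposal is correct and follows essentially the same route as the paper. Harmonicity of $H$ on the closed surface forces $H$ to be constant; the factorization equations then make $\beta$ (hence $s$) and $|A|^2$ constant; the principal curvatures are the roots of a quadratic with constant coefficients; and the converse is the CMC criterion with a constant $\beta$, where your added details (the integration by parts, the explicit quadratic $x^2-2Hx+\tfrac12(4H^2-|A|^2)$, and allowing a complex square root) fill in steps the paper states tersely.
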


\begin{proof}
The harmonic function $H$ is constant on a closed surface.  Then \eqref{eq:s-system} makes $s$ constant, hence $|A|^2$ is constant.  The principal curvatures are the roots of a quadratic polynomial whose trace and squared norm are constant, so their unordered pair is constant.  The converse is \Cref{cor:CMC}.
\end{proof}

\begin{corollary}[A noncompact Liouville extension]\label{cor:liouville-rigidity}
Suppose the factorization field is globally defined and $M$ has the
bounded-holomorphic Liouville property, meaning that every bounded holomorphic
function on $M$ is constant.  Then the principal curvatures are constant.
\end{corollary}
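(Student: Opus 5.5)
The plan is to use the holomorphic discriminant datum $\mathcal F=H+is$ from \Cref{cor:holomorphic-datum}, which is available on every non-CMC factorizing chart, and to show that it is bounded. Since the factorization field is assumed globally defined, the function $\beta$, and hence $s=\beta+\eta$, is a global smooth function on $M$. By \Cref{thm:disc-structure}, the relation $\grad s=J\grad H$ holds globally.

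I would split into two cases according to whether $H$ is constant.

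\emph{Case 1: $H$ is constant.} Then $\grad s=0$, so $s$ is constant on the connected surface $M$. The relation $s^2=9c/4-|A|^2$ then makes $|A|^2$ constant. The principal curvatures are the roots of $x^2-2Hx+\tfrac12(4H^2-|A|^2)$, whose coefficients are constant, so their unordered pair is constant. This is exactly the argument used in the proof of \Cref{thm:compact-rigidity}.

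\emph{Case 2: $H$ is nonconstant.} Here I argue by contradiction.
\begin{enumerate}[label=\textup{(\alph*)},leftmargin=2.2em]
\item By \Cref{thm:disc-structure}, $s$ is real-valued, $c>0$, and $\mathcal F=H+is$ is a global holomorphic function with respect to the conformal structure fixed by the orientation.
\item Identity \eqref{eq:A0-disc} gives $2H^2+s^2=9c/4-|A^\circ|^2\le 9c/4$. Hence $\mathcal F$ takes values in a bounded ellipse, and is bounded.
\item The Liouville hypothesis then forces $\mathcal F$ to be constant. In particular $H$ is constant, which contradicts the assumption of Case 2.
\end{enumerate}
So Case 2 cannot occur, and we are back in Case 1.

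\emph{Main obstacle.} The delicate point is the reality of $s$ in Case 2. The proof of \Cref{thm:disc-structure} shows $\operatorname{Im}s$ is constant and that $s\,\operatorname{Im}s$ must be real. If $\operatorname{Im}s\neq0$, this forces $\operatorname{Re}s=0$ and hence $H$ constant. That step is pointwise and algebraic, so on a connected $M$ with $H$ nonconstant it does give $s$ real globally, and the ellipse bound in (b) is legitimate.

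Two further remarks check that the argument is not vacuous or circular:
\begin{itemize}
\item In Euclidean or hyperbolic space, the non-CMC branch is already excluded by \Cref{thm:disc-structure}, so the Liouville step is only needed when $c>0$.
\item Boundedness of $\mathcal F$ uses only \eqref{eq:s-system} together with $|A^\circ|^2\ge0$. No appeal to the later Frobenius obstruction is required.
\end{itemize}
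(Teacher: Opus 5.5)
Your proof is correct and follows the paper's argument. A non-CMC branch forces $c>0$ and real $s$, so $\mathcal F=H+is$ is a bounded holomorphic function by the ellipse bound from \eqref{eq:A0-disc}, and the Liouville hypothesis gives a contradiction. Your explicit CMC case, which repeats the argument of \Cref{thm:compact-rigidity}, spells out a step the paper leaves implicit.
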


\begin{proof}
A non-CMC branch would have $c>0$, and the nonconstant holomorphic function $\mathcal F=H+is$ would be bounded by the ellipse in the proof of \Cref{cor:no-umbilic}.  This contradicts the assumed Liouville property.
\end{proof}

\section{Local spherical rigidity by a finite-type Frobenius obstruction}\label{sec:local-reduction}

The preceding section has reduced the entire non-CMC question to one sharply defined case: a nonumbilic surface in $\Sph^3(c)$ with $c>0$.  The purpose of this section is to close that final branch.  The argument proceeds in four steps:
\begin{enumerate}[label=\textup{(\arabic*)},leftmargin=2.2em]
\item use the holomorphic coordinate $H+is$ to rewrite Gauss--Codazzi;
\item eliminate the phase of the Hopf differential and obtain a scalar eikonal--Liouville system;
\item prolong the phase form to a rank-two distribution on a finite-dimensional state space;
\item prove that its Frobenius torsion is strictly negative by exact algebraic certificates.
\end{enumerate}
An integral surface would require the torsion to vanish, so the strict sign gives the desired local contradiction.

\subsection{Phase-free Gauss--Codazzi reduction}

Assume for contradiction that the holomorphic function
\[
\mathcal F=H+is
\]
is nonconstant.  Its regular set is nonempty and open.  On any connected regular chart we use
\begin{equation}\label{eq:z-F}
z=x+iy:=\mathcal F
\end{equation}
as a conformal coordinate and write
\[
g=e^{2u}|\dd z|^2,
\qquad
H=x,
\qquad
s=y.
\]
Set
\begin{equation}\label{eq:R-K-local}
R(x,y):=\frac{9c}{4}-2x^2-y^2,
\qquad
K(x,y):=2x^2+\frac12y^2-\frac c8.
\end{equation}
By \Cref{cor:no-umbilic}, $R>0$.

Let $Q\,\dd z^2$ be the Hopf differential, normalized by
\[
II=Q\,\dd z^2+He^{2u}|\dd z|^2+\overline Q\,\dd\overline z^2.
\]
The Gauss--Codazzi equations are
\begin{align}
\Delta_0u+e^{2u}K&=0,\label{eq:GC-gauss}\\
Q_{\bar z}&=\frac14e^{2u},\label{eq:GC-codazzi}\\
|Q|^2&=\frac18e^{4u}R,\label{eq:GC-modulus}
\end{align}
where $\Delta_0=\partial_x^2+\partial_y^2$.

\begin{proposition}[Phase-free elimination of the Hopf differential]\label{prop:Phi-reduction}
Locally there is a real function $\Phi$ such that $Q=\Phi_z$.  With
\[
\rho:=\Delta_0\Phi=e^{2u}>0,
\]
Equations \eqref{eq:GC-gauss}--\eqref{eq:GC-modulus} are equivalent to
\begin{align}
|\grad_0\Phi|^2&=\frac R2(\Delta_0\Phi)^2,\label{eq:eikonal-Phi}\\
\Delta_0\log(\Delta_0\Phi)+2K\Delta_0\Phi&=0,\label{eq:Liouville-Phi}\\
\Delta_0\Phi&>0.\label{eq:positive-Phi}
\end{align}
\end{proposition}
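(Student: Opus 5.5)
The plan is to construct $\Phi$ by solving a Poisson equation for the conformal density and then correcting by a harmonic function so that $Q=\Phi_z$ holds exactly. After that, each of \eqref{eq:GC-gauss}--\eqref{eq:GC-modulus} is translated term by term. I use the standard conventions $\partial_z=\tfrac12(\partial_x-i\partial_y)$ and $\Delta_0=4\partial_z\partial_{\bar z}$. For a real function $f$ these give $|f_z|^2=\tfrac14|\grad_0 f|^2$.

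\emph{Construction of $\Phi$.} Shrink the chart to a disc. There I solve the real Poisson equation $\Delta_0\Phi_0=e^{2u}$, for instance with the Newtonian potential of the smooth right-hand side, so $\Phi_0$ is real and smooth. Then $(\Phi_0)_{z\bar z}=\tfrac14e^{2u}$. By the Codazzi equation \eqref{eq:GC-codazzi}, the function $h:=Q-(\Phi_0)_z$ satisfies $h_{\bar z}=0$, so $h$ is holomorphic on the disc. Choose a holomorphic primitive $G$ with $G'=h$ and set $\Phi:=\Phi_0+G+\overline G=\Phi_0+2\Re G$. This $\Phi$ is real, and since $\overline G$ is antiholomorphic we get $\Phi_z=(\Phi_0)_z+G'=Q$. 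Because $2\Re G$ is harmonic, $\Delta_0\Phi=\Delta_0\Phi_0=e^{2u}=\rho>0$, which is \eqref{eq:positive-Phi}. The function $\Phi$ is unique up to an additive real constant: a real function with vanishing $z$-derivative is constant.

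\emph{Translating the equations.} With $Q=\Phi_z$ and $\Phi$ real, $|Q|^2=\tfrac14|\grad_0\Phi|^2$ and $e^{4u}=\rho^2$. The modulus equation \eqref{eq:GC-modulus} then reads $\tfrac14|\grad_0\Phi|^2=\tfrac18\rho^2R$, which is \eqref{eq:eikonal-Phi}. Since $u=\tfrac12\log\rho$, the Gauss equation \eqref{eq:GC-gauss} becomes $\tfrac12\Delta_0\log\rho+\rho K=0$, which is \eqref{eq:Liouville-Phi} after multiplying by $2$. Codazzi is built into the construction, since $\Phi_{z\bar z}=\tfrac14\Delta_0\Phi=\tfrac14e^{2u}$.

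\emph{Converse.} Given a real $\Phi$ satisfying \eqref{eq:eikonal-Phi}--\eqref{eq:positive-Phi}, define $u:=\tfrac12\log\Delta_0\Phi$, which is possible by positivity, and $Q:=\Phi_z$. Then Codazzi holds identically, and the two computations above run backwards to give \eqref{eq:GC-modulus} and \eqref{eq:GC-gauss}. So the two systems are equivalent, with the correspondence between $(u,Q)$ and $\Phi$ bijective up to the additive constant in $\Phi$.

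\emph{Main obstacle.} The only genuinely nonalgebraic point is making $\Phi$ real while keeping $\Phi_z=Q$ exactly. A naive Poisson solution matches $Q$ only up to a holomorphic error. The fix is that this error is absorbed by the harmonic real function $2\Re G$, whose $z$-derivative is exactly $G'$. This step needs the chart to be simply connected, so the primitive $G$ exists, and it uses the reality of $e^{2u}$ in \eqref{eq:GC-codazzi}. The rest is bookkeeping of the factors $\tfrac14$ and $\tfrac18$ under the stated normalization of the Hopf differential.
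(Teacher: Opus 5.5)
Your proof is correct, but it constructs $\Phi$ differently from the paper. The paper writes $Q=A+iB$ and notes that reality of $Q_{\bar z}$ is exactly $B_x+A_y=0$. This is the closedness of the real one-form $2\Re(Q\dd z)=2(A\dd x-B\dd y)$. The Poincar\'e lemma then gives a real $\Phi$ with $\dd\Phi=2\Re(Q\dd z)$, which for real $\Phi$ is the same as $\Phi_z=Q$. You instead solve the Poisson equation $\Delta_0\Phi_0=e^{2u}$ and absorb the holomorphic discrepancy $Q-(\Phi_0)_z$ into the harmonic correction $2\Re G$. Both arguments use the same input: the real right-hand side of the Codazzi equation and simple connectivity of the chart. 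The paper's route needs only the Poincar\'e lemma, whereas yours calls on an elliptic existence result (a Newtonian potential on a shrunken disc, which is harmless). In exchange, you state explicitly the converse direction and the uniqueness of $\Phi$ up to an additive real constant, both of which the paper leaves implicit. The translation of the modulus and Gauss equations into the eikonal and Liouville equations is identical in both.
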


\begin{proof}
Writing $Q=A+iB$, the reality of $Q_{\bar z}$ gives $B_x+A_y=0$, which is the local integrability condition for $Q=\Phi_z$ with $\Phi$ real.  Then
\[
\Delta_0\Phi=4\Phi_{z\bar z}=4Q_{\bar z}=e^{2u}
\]
and $|\grad_0\Phi|^2=4|Q|^2$, yielding \eqref{eq:eikonal-Phi}.  Since $u=\frac12\log \rho$, the Gauss equation becomes \eqref{eq:Liouville-Phi}.
\end{proof}

\subsection{Normalization and the phase system}

\begin{lemma}[Curvature normalization]\label{lem:spherical-normalization}
For $c>0$, replace the ambient metric $h$ by $\widehat h=ch$.  Then the ambient curvature becomes $1$, while
\[
\widehat g=cg,
\qquad
\widehat A=c^{-1/2}A,
\qquad
\widehat H=c^{-1/2}H,
\qquad
\widehat s=c^{-1/2}s.
\]
Consequently the dimensionless Gauss--Codazzi and factorization equations have the same form with $c=1$.
\end{lemma}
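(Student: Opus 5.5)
The plan is a direct scaling computation. Set $\widehat h=ch$ and let $\mu=c^{1/2}>0$, so that $\widehat h=\mu^2h$. Four elementary facts about constant rescalings will carry the argument.

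First, the Levi-Civita connection is unchanged, $\widehat\nabla^N=\nabla^N$, and the sectional curvature scales as $\widehat K=\mu^{-2}K=c^{-1}\cdot c=1$.

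Second, the induced metric is $\widehat g=\phi^*\widehat h=cg$, which again has the same Levi-Civita connection as $g$.

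Third, the unit normal for $\widehat h$ is $\widehat\xi=\mu^{-1}\xi$. The Weingarten formula \eqref{eq:Weingarten} then gives $-\widehat A(X)=\nabla^N_X\widehat\xi=\mu^{-1}\nabla^N_X\xi=-\mu^{-1}A(X)$. Hence $\widehat A=c^{-1/2}A$ as endomorphisms of $TM$, and taking half the trace gives $\widehat H=c^{-1/2}H$. From this, $|\widehat A|^2_{\widehat g}=\tr(\widehat A^2)=c^{-1}|A|^2$.

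Fourth, the Killing number transforms as $\widehat\eta=c^{-1/2}\eta$, which is consistent with $4\widehat\eta^2=1$. The Clifford relation \eqref{eq:Clifford} for $\widehat g$ is realised by $\widehat X\cl=\mu^{-1}X\cl$ on the same spinor module. With this, the gradient $\grad_{\widehat g}f=c^{-1}\grad_g f$ and the rotation $J$ (which is conformally invariant) combine in \eqref{eq:beta-differential} and in \eqref{eq:s-system} as follows. The equation $\grad_{\widehat g}\widehat s=J\grad_{\widehat g}\widehat H$ equals $c^{-3/2}$ times $\grad s=J\grad H$, so the two are equivalent. The algebraic relation $\widehat s^2=\tfrac94-|\widehat A|^2$ is $c^{-1}$ times $s^2=\tfrac{9c}{4}-|A|^2$. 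This justifies $\widehat s=c^{-1/2}s$. Equivalently, $\widehat\beta=c^{-1/2}\beta$, and this choice makes the factorization \eqref{eq:factorization-ansatz} scale homogeneously, because $\widehat D=\mu^{-1}D$ while $\widehat\vol=c^{-1}\vol$ in the vector convention and $\vol$ in the normalized Clifford action.

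The remaining work is to check that every equation used later is homogeneous of a fixed weight in $c^{-1/2}$. For the Gauss--Codazzi system, one uses the same conformal coordinate, now rescaled to $\widehat z=c^{-1/2}z$ so that $\widehat H+i\widehat s=\widehat z$. Then $e^{2\widehat u}|\dd\widehat z|^2=c\,e^{2u}|\dd z|^2$ gives $e^{2\widehat u}=c^2e^{2u}$, and the Hopf differential satisfies $\widehat Q\,\dd\widehat z^2=c^{1/2}Q\,\dd z^2$ (since $II$ scales by $\mu$). Substituting these into \eqref{eq:GC-gauss}--\eqref{eq:GC-modulus}, with $\widehat R=c^{-1}R$ and $\widehat K=c^{-1}K$ expressed in the hatted variables, reproduces the same equations with $c=1$.

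The main obstacle is bookkeeping rather than ideas. The one place that needs care is the spinorial side, where the Clifford action and $\vol$ must be normalized consistently so that \eqref{eq:factorization-ansatz} transforms homogeneously. If that becomes awkward, I would bypass it by working only with the scalar system \eqref{eq:s-system} and the Gauss--Codazzi equations. The sequel uses only these, and for them the check is purely a matter of matching weights.
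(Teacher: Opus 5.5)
Your proposal is correct and follows essentially the paper's own route. A constant rescaling leaves the Levi-Civita connection unchanged, and the unit normal scales by $c^{-1/2}$, so $A$, $H$ and $s$ all scale by $c^{-1/2}$ (for $s$, via $s^2=9c/4-|A|^2$ together with $\grad s=J\grad H$); substituting into Gauss--Codazzi then finishes the argument, and your explicit bookkeeping $\widehat z=c^{-1/2}z$, $e^{2\widehat u}=c^2e^{2u}$ fills in what the paper leaves implicit. One small slip: for $\widehat g=\mu^2 g$ the Clifford action must be $X\,\widehat{\cdot}\,\Psi=\mu\,X\cl\Psi$, not $\mu^{-1}$, so that $\widehat g$-unit vectors act as $g$-unit vectors did; the consequences you actually use ($\widehat D=\mu^{-1}D$, $\widehat\vol$ acting as $\vol$, and $\widehat\eta=\mu^{-1}\eta$) are the correct ones.
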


\begin{proof}
A constant rescaling does not change the Levi--Civita connection.  The unit normal rescales by $c^{-1/2}$, so the shape operator and its trace rescale by $c^{-1/2}$.  The shifted chiral field has the same inverse-length weight because $s^2=9c/4-|A|^2$.  Substitution into the Gauss--Codazzi equations proves the claim.
\end{proof}

We henceforth set $c=1$ and write
\begin{equation}\label{eq:R-K-unit}
R=\frac94-2x^2-y^2,
\qquad
K=2x^2+\frac12y^2-\frac18,
\qquad
\mu=\sqrt{\frac2R},
\qquad
\rho=e^{2u}>0.
\end{equation}
Because $R>0$, Equation \eqref{eq:GC-modulus} gives $Q\neq0$.  After shrinking the chart, a real phase $\theta$ therefore exists such that
\begin{equation}\label{eq:Q-phase}
Q=e^{2u}\sqrt{\frac R8}\,e^{i\theta}.
\end{equation}
Put $C=\cos\theta$, $S=\sin\theta$ and introduce
\begin{equation}\label{eq:XY-frame}
X=C\partial_x-S\partial_y,
\qquad
Y=-S\partial_x-C\partial_y,
\end{equation}
together with
\[
d=2xC-yS,
\qquad
e=2xS+yC.
\]
The Gauss-phase equation determines
\begin{equation}\label{eq:a-def-local}
a:=S\theta_x+C\theta_y
=\frac dR+\frac3{\mu R}
 +\frac{8x^2+2y^2}{\mu R^2}+\frac{2K\rho}{\mu}.
\end{equation}
Combining the two Codazzi equations gives
\begin{equation}\label{eq:first-order-local}
Xu=P,
\qquad
Yu=v,
\qquad
X\theta=B,
\qquad
Y\theta=-a,
\end{equation}
where
\begin{equation}\label{eq:PB-local}
P=\frac12\left(a+\mu+\frac dR\right),
\qquad
B=2v+\frac eR,
\end{equation}
and $v:=Yu$ is the sole undetermined first derivative.  Since
\begin{equation}\label{eq:XY-commutator}
[X,Y]=-BX+aY,
\end{equation}
compatibility for $u$ and $\theta$ determines both derivatives of $v$:
\begin{align}
Xv&=f:=YP-BP+av,\label{eq:f-local}\\
Yv&=g:=\frac12\left(B^2+a^2-Xa-Y\!\left(\frac eR\right)\right).\label{eq:g-local}
\end{align}
Thus the first prolongation defines two vector fields on the five-dimensional state space $(x,y,u,\theta,v)$,
\begin{align}
\mathcal X&=C\partial_x-S\partial_y
 +P\partial_u+B\partial_\theta+f\partial_v,\label{eq:Xcal-local}\\
\mathcal Y&=-S\partial_x-C\partial_y
 +v\partial_u-a\partial_\theta+g\partial_v.\label{eq:Ycal-local}
\end{align}
Here $v$ records a first derivative of $u$, while $f$ and $g$ record its
derivatives in the $X$- and $Y$-directions.  In
\eqref{eq:f-local}--\eqref{eq:g-local}, expressions such as $YP$ and $Xa$
are total derivatives on the prolonged state space: the prescribed relations
$Xu=P$, $Yu=v$, $X\theta=B$, and $Y\theta=-a$ are substituted before
differentiation is completed.  A direct commutator calculation gives
\begin{equation}\label{eq:frobenius-bracket}
[\mathcal X,\mathcal Y]+B\mathcal X-a\mathcal Y
=\mathcal C\,\partial_v,
\end{equation}
where
\begin{equation}\label{eq:torsion-def}
\boxed{
\mathcal C:=\mathcal X(g)-\mathcal Y(f)+Bf-ag.
}
\end{equation}
Therefore an integral surface of the prolonged rank-two distribution can
exist only if $\mathcal C=0$.

\subsection{Strict sign of the Frobenius curvature}

\begin{theorem}[Finite-type spherical obstruction]\label{thm:finite-type-spherical}
On the region $R>0$ and $\rho>0$, the curvature $\mathcal C$ in \eqref{eq:torsion-def} is strictly negative.  Hence the spherical non-CMC compatibility system has no local solution.
\end{theorem}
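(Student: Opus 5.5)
The plan is to compute $\mathcal C$ symbolically on the state space $(x,y,u,\theta,v)$ and then certify its sign on $\{R>0,\rho>0\}$. First note which variables actually occur. The vector fields $\mathcal X,\mathcal Y$ depend on $u$ only through $\rho=e^{2u}$, and this dependence enters via the term $2K\rho/\mu$ in $a$. They depend on $\theta$ only through $C,S$ (inside $d,e,X,Y$) and on $v$ polynomially. So I will treat $(x,y,\rho,C,S,v)$ as the working variables, with $C^2+S^2=1$. The derivative rules come from \eqref{eq:Xcal-local}--\eqref{eq:Ycal-local}:
\[
\mathcal X\rho=2\rho P,\qquad \mathcal Y\rho=2\rho v,\qquad \mathcal X C=-SB,\qquad \mathcal Y C=Sa,
\]
and analogously for $S$.

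With these rules I will expand $f$ and $g$ from \eqref{eq:f-local}--\eqref{eq:g-local} as explicit polynomials in $v,\rho,C,S$. Their coefficients are rational in $x,y,\sqrt R$, and I will write $\mu=\sqrt{2/R}$ in terms of $R$. I will then substitute into \eqref{eq:torsion-def}. I expect $f$ to be quadratic in $v$ and $g$ to be quadratic in $v$ (through $B^2$). The structural fact I will try to exploit is that the $v$-dependence of $\mathcal C$ cancels, or at least collapses to a controllable form. This is plausible because the Frobenius curvature of a prolongation of a second-order system should be a lower-order invariant. The same reasoning suggests the $\theta$-dependence should largely cancel: the Gauss--Codazzi data are rotation-covariant, and the combinations $d,e$ are the coordinates of the vector $(2x,y)$ in the rotated frame. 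Concretely, I will verify these cancellations by collecting coefficients of $v^k$ and of the monomials in $C,S$, so that $\mathcal C$ reduces to a function $\mathcal C(x,y,\rho,C,S)$. Ideally it becomes a polynomial in $\rho$ with coefficients in $d,e,x,y,\sqrt R$.

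Next, to prove strict negativity, I will clear the positive denominators, which are powers of $R$ and $\mu$. This makes $-\mathcal C\cdot R^{m}$ a polynomial $\mathcal P$ in $(x,y,\sqrt R,\rho,d,e)$, subject to two constraints. The first is the domain $R>0,\rho>0$. The second is the identity $d^2+e^2=4x^2+y^2$, which replaces $C,S$. I will then look for an explicit certificate:
\[
\mathcal P=\sum_i \sigma_i\, m_i,
\]
where each $\sigma_i$ is a sum of squares and each $m_i$ is a product of the positive quantities $\rho$, $R$, $\sqrt R$, and $9/4-R=2x^2+y^2$. At least one term must be a strictly positive constant multiple of a positive monomial, for example a leading term like $\rho^2 K^2$ or a pure $R$-power. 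I will first try to organize $\mathcal P$ as a polynomial in $\rho$ and check the sign of each $\rho$-coefficient separately. If a middle coefficient has indefinite sign, I will complete the square in $\rho$ against the outer coefficients, which amounts to a discriminant bound.

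The main obstacle is this final positivity certificate. The expansion is mechanical but long, and the sign of the $\rho$-linear coefficient interacting with $K$, which changes sign on the ellipse $K=0$ inside $R>0$, is where a naive termwise argument will fail. There I would try splitting the domain along $K=0$, using $K=2x^2+\tfrac12y^2-\tfrac18$ together with $R+2x^2+y^2=9/4$ to rewrite everything in the two nonnegative invariants $R$ and $w=2x^2+y^2$. I would then search for the SOS decomposition in these reduced variables, checked by exact rational arithmetic.

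Once $\mathcal C<0$ is established, the conclusion is immediate. By \eqref{eq:frobenius-bracket}, any integral surface of the prolonged distribution forces $\mathcal C=0$ along it. A local solution of the spherical non-CMC system from \Cref{prop:Phi-reduction} would yield such an integral surface, and this contradicts $\mathcal C<0$.
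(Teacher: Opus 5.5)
\textbf{Verdict: genuine gap.} Your closing step is fine: a local solution yields an integral surface, so \eqref{eq:frobenius-bracket} forces $\mathcal C=0$, and $\mathcal C<0$ gives the contradiction. The gap is in the sign proof. It rests on two expected cancellations that do not occur, and your certificate scheme has no slot for the variable that carries the real difficulty.

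\emph{The $v$-dependence survives.} Here $f$ is linear in $v$, and $g$ contains $\tfrac12B^2=2v^2+\dots$. After expansion one gets
$$\mathcal C=\gamma_2v^2+\gamma_1v+\alpha_2\rho^2+\alpha_1\rho+\alpha_0,\qquad \gamma_2=-2\sqrt2\,L/D^{3/2}\neq0.$$
At the center, for instance, $\mathcal C=-\tfrac{\sqrt2}{6912}(32256v^2+81\rho^2+25632\rho+11008)$. Since $v=Yu$ is a free fibre coordinate of the prolonged state space, negativity must be proved for every $v\in\R$. A certificate for a polynomial in $(x,y,\sqrt R,\rho,d,e)$ alone cannot deliver that.

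\emph{The $\theta$-dependence survives.} The coordinate $z=H+is$ is fixed by the geometry and is anisotropic, since $R=\tfrac94-2x^2-y^2$. So there is no rotational symmetry to exploit. Indeed $\gamma_1$ and $\alpha_1$ contain $xS$ and $yC$ with different polynomial weights, e.g.\ $\gamma_1\propto xS(48X_0+56Y_0-198)+yC(8X_0+20Y_0-81)$. These terms are not polynomial in $d,e$.

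\emph{The proposed reduction to $R$ and $w$ is unavailable.} Your reduction to ``$R$ and $w=2x^2+y^2$'' uses a single variable, because $R+w=\tfrac94$. The coefficients, however, depend on $x^2$ and $y^2$ separately, e.g.\ $L=63-24x^2-20y^2$.

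\emph{The missing idea is concavity in $v$.} Because $\gamma_2<0$, $\mathcal C<0$ for all real $v$ if and only if the $v$-discriminant is negative:
$$\gamma_1^2-4\gamma_2(\alpha_2\rho^2+\alpha_1\rho+\alpha_0)=-(J_2\rho^2+J_1\rho+J_0)<0.$$
It then suffices to check three coefficients.
\begin{itemize}
\item $J_2=4\gamma_2\alpha_2\ge0$ is automatic, because $\alpha_2\propto-(16X_0+4Y_0-1)^2M=-(8K)^2M$. So the sign change of $K$ that worried you is harmless.
\item $J_1=4\gamma_2\alpha_1>0$ requires $\alpha_1<0$ uniformly in $\theta$. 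The paper obtains this from the amplitude bound $|4xUC+yVS|\le\sqrt{16X_0U^2+Y_0V^2}$ together with a polynomial certificate.
\item $J_0=4\gamma_2\alpha_0-\gamma_1^2>0$ is the real difficulty. The $\theta$-dependent cross term $\gamma_1v$ must be dominated by $\alpha_0$.
\end{itemize}
For the last inequality the paper writes $D^6J_0$ as an inhomogeneous quadratic in $z=(C,S)$. It shows the matrix is positive definite and the Schur complement positive, using the exact factorizations $\det Q=(32D)^2(-L)F_2F_3$ and $-65536D^2ML^2F_3F_4$. The signs of $N$, $P_C$, $F_2$, $F_3$, $F_4$ and of the amplitude polynomial are certified by Bernstein expansions on the triangle $\{x^2,y^2\ge0,\ D\ge0\}$. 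Because your plan discards $v$ before certifying signs, it never confronts this coupling between $v$ and the phase, which is the heart of the obstruction.
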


\begin{proof}
Set
\begin{equation}\label{eq:XYDLM-local}
X_0=x^2,
\qquad
Y_0=y^2,
\qquad
D=9-8X_0-4Y_0=4R,
\end{equation}
\[
L=63-24X_0-20Y_0,
\qquad
M=24X_0+4Y_0+9.
\]
On the closed discriminant triangle $X_0,Y_0\ge0$, $D\ge0$, one has $L\ge18$ and $M>0$.  The differential system itself is used only on the interior $D>0$; the closure is used solely for polynomial sign certification.

Exact algebraic reduction of \eqref{eq:torsion-def}, using $C^2+S^2=1$ and $\mu^2R=2$, gives
\begin{equation}\label{eq:C-quadratic-local}
\mathcal C
=\gamma_2v^2+\gamma_1v
 +\alpha_2\rho^2+\alpha_1\rho+\alpha_0,
\end{equation}
where
\begin{align}
\gamma_2&=-\frac{2\sqrt2\,L}{D^{3/2}}<0,\label{eq:gamma2-local}\\
\gamma_1&=\frac{16\sqrt2}{D^{5/2}}
 \Bigl[xS(48X_0+56Y_0-198)
       +yC(8X_0+20Y_0-81)\Bigr],\label{eq:gamma1-local}\\
\alpha_2&=-\frac{\sqrt2}{256\sqrt D}
 (16X_0+4Y_0-1)^2M\le0.\label{eq:alpha2-local}
\end{align}
For the coefficient of $\rho$, define
\begin{align}
U={}&192X_0^2+256X_0Y_0-720X_0
   +64Y_0^2-292Y_0+261,\label{eq:U-local}\\
V={}&64X_0^2-128X_0Y_0+704X_0
   -48Y_0^2+288Y_0-261,\label{eq:V-local}\\
N={}&4096X_0^3-2304X_0^2Y_0+23616X_0^2
   -2816X_0Y_0^2+19968X_0Y_0-18576X_0\notag\\
 &\quad -576Y_0^3+4592Y_0^2-9468Y_0+7209.\label{eq:N-local}
\end{align}
Then
\begin{equation}\label{eq:alpha1-local}
\alpha_1
=-\frac{\sqrt2\,N}{8D^{5/2}}
 -\frac{4xUC+yVS}{D^2}.
\end{equation}
The exact polynomial certificates in \Cref{app:bernstein-certificate} give
\begin{equation}\label{eq:N-amplitude-local}
N>0,
\qquad
N^2-32D\bigl(16X_0U^2+Y_0V^2\bigr)>0.
\end{equation}
Moreover,
\[
|4xUC+yVS|
\le \sqrt{16X_0U^2+Y_0V^2}.
\]
After squaring, the second inequality in \eqref{eq:N-amplitude-local} is precisely
\[
\frac{\sqrt2\,N}{8D^{5/2}}
>
\frac{\sqrt{16X_0U^2+Y_0V^2}}{D^2}.
\]
Thus the negative constant term in \eqref{eq:alpha1-local} strictly dominates its trigonometric part and
\begin{equation}\label{eq:alpha1-negative-local}
\alpha_1<0.
\end{equation}

Regard \eqref{eq:C-quadratic-local} as a quadratic in $v$.  Its discriminant is
\begin{equation}\label{eq:Delta-v-local}
\Delta_v
=\gamma_1^2-4\gamma_2
 (\alpha_2\rho^2+\alpha_1\rho+\alpha_0)
=-J(\rho),
\end{equation}
where
\[
J(\rho)=J_2\rho^2+J_1\rho+J_0.
\]
The first two coefficients satisfy
\begin{equation}\label{eq:J21-local}
J_2=\frac{L(16X_0+4Y_0-1)^2M}{16D^2}\ge0,
\qquad
J_1=4\gamma_2\alpha_1>0.
\end{equation}
It remains to prove $J_0>0$.

Let $z=(C,S)^T$.  A further exact reduction gives
\begin{equation}\label{eq:J0-matrix-local}
D^6J_0=z^TQz+\ell^Tz+c_0,
\end{equation}
where
\[
Q=\begin{pmatrix}q_{11}&q_{12}\\q_{12}&q_{22}\end{pmatrix},
\]
\begin{align}
q_{11}&=32DP_C,
& q_{22}&=32DP_S,
& q_{12}&=8192Dxy(8X_0+9)(2Y_0-9),\label{eq:q-local}
\end{align}
with
\begin{align}
P_C={}&10752X_0^3+12544X_0^2Y_0-93888X_0^2
 +2944X_0Y_0^2-72000X_0Y_0+139320X_0\notag\\
 &-2880Y_0^3+19152Y_0^2-64476Y_0+86751,\label{eq:PC-local}\\
P_S={}&10752X_0^3+16640X_0^2Y_0-93888X_0^2
 -1152X_0Y_0^2-25920X_0Y_0+56376X_0\notag\\
 &-2880Y_0^3+19152Y_0^2-59292Y_0+86751.\label{eq:PS-local}
\end{align}
Moreover,
\begin{align}
\ell_1&=1024\sqrt2\,x(2Y_0-9)\sqrt D\,ML,
&\ell_2&=256\sqrt2\,y(8X_0+9)\sqrt D\,ML,\label{eq:ell-local}\\
c_0&=16(8X_0-4Y_0+27)^2ML.\label{eq:c0-local}
\end{align}
Define
\begin{align}
F_2={}&448X_0^2+320X_0Y_0-2736X_0
       -144Y_0^2+504Y_0-1377,\label{eq:F2-local}\\
F_3={}&10752X_0^3+12544X_0^2Y_0-93888X_0^2
       -1152X_0Y_0^2-35136X_0Y_0+56376X_0\notag\\
 &\quad -2880Y_0^3+19152Y_0^2-64476Y_0+86751,\label{eq:F3-local}\\
F_4={}&28672X_0^4+90112X_0^3Y_0+18432X_0^3
 +92160X_0^2Y_0^2-377856X_0^2Y_0+1047168X_0^2\notag\\
 &\quad +30720X_0Y_0^3-281088X_0Y_0^2
 +1337472X_0Y_0-1842912X_0\notag\\
 &\quad -2304Y_0^4+39168Y_0^3-215136Y_0^2
 +711504Y_0-1003833.\label{eq:F4-local}
\end{align}
The exact certificates in \Cref{app:bernstein-certificate} establish
\begin{equation}\label{eq:factor-signs-local}
P_C>0,
\qquad
F_2<0,
\qquad
F_3>0,
\qquad
F_4<0.
\end{equation}
The determinant factors as
\begin{equation}\label{eq:detQ-local}
\det Q=(32D)^2(-L)F_2F_3>0.
\end{equation}
Since $q_{11}=32DP_C>0$, the matrix $Q$ is positive definite.  The Schur-complement numerator has the exact factorization
\begin{align}
&4c_0\det Q-
 \bigl(q_{22}\ell_1^2-2q_{12}\ell_1\ell_2+q_{11}\ell_2^2\bigr)\notag\\
&\hspace{35mm}=-65536D^2ML^2F_3F_4>0.\label{eq:Schur-local}
\end{align}
Therefore
\[
c_0-\frac14\ell^TQ^{-1}\ell>0.
\]
Completing the square explicitly,
\begin{align*}
z^TQz+\ell^Tz+c_0
={}&\left(z+\frac12Q^{-1}\ell\right)^T
Q\left(z+\frac12Q^{-1}\ell\right)\\
&+c_0-\frac14\ell^TQ^{-1}\ell>0.
\end{align*}
Thus $J_0>0$.  Notice that this final estimate is valid for every $z\in\R^2$ and does not require the constraint $C^2+S^2=1$.

It follows from \eqref{eq:J21-local} that $J(\rho)>0$ for every $\rho>0$, hence $\Delta_v<0$.  Because $\gamma_2<0$, the quadratic \eqref{eq:C-quadratic-local} is strictly negative for every real $v$, contradicting the necessary condition \eqref{eq:torsion-def}.  No local solution exists.
\end{proof}

\begin{theorem}[Local Dirac-discriminant rigidity]\label{thm:local-rigidity}
Let $\phi:M^2\to N^3(c)$ be a connected oriented immersed surface.  Suppose its spinorial Ou operator admits a mean-curvature-normalized factorization of the form \eqref{eq:factorization-ansatz}.  Then
\[
\grad H=0,
\qquad
\grad|A|^2=0.
\]
Consequently the principal curvatures are locally constant.
\end{theorem}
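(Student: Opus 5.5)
The plan is to argue by contradiction on open sets and then propagate constancy locally. By \Cref{thm:factorization} the factorization is equivalent to \eqref{eq:beta-differential}--\eqref{eq:beta-algebraic}, and \Cref{thm:disc-structure} yields the shifted field $s=\beta+\eta$ with $\grad s=J\grad H$ and $s^2=9c/4-|A|^2$. Since $H$ is harmonic (\Cref{cor:harmonic-H}), it is real-analytic. Hence either $\grad H\equiv0$ on the connected surface $M$, or the regular set $\{\grad H\neq0\}$ is open and dense. I will therefore assume that $H$ is nonconstant on some connected open chart $\Omega$ and derive a contradiction.

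On such a chart, \Cref{thm:disc-structure} already eliminates the cases $c\le 0$: a non-CMC branch forces $s$ to be real and $c>0$. This disposes of $\R^3$ and $\Hyp^3$ at once. For $c>0$ the steps are as follows.
\begin{enumerate}[label=\textup{(\arabic*)},leftmargin=2.2em]
\item Rescale to $c=1$ by \Cref{lem:spherical-normalization}.
\item Use \Cref{cor:no-umbilic} to get $R>0$. Then shrink $\Omega$ to a connected regular chart of $\mathcal F=H+is$, where $\mathcal F'\neq0$, so that $z=\mathcal F$ is a conformal coordinate.
\item Rewrite Gauss--Codazzi through \Cref{prop:Phi-reduction} and the phase representation \eqref{eq:Q-phase}. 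This produces $(u,\theta)$ satisfying \eqref{eq:first-order-local}, with $v=Yu$.
\item Observe that the map $p\mapsto(x,y,u,\theta,v)(p)$ is an integral surface of the rank-two distribution spanned by $\mathcal X,\mathcal Y$. Here $v$ is smooth, and the equations for $Xv$ and $Yv$ are forced by equality of mixed partials of $u$ and $\theta$. The map is an immersion because its projection $(x,y)$ is a local diffeomorphism.
\end{enumerate}

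Along an integral surface, the bracket of the pushed-forward fields must again lie in the span of $\mathcal X,\mathcal Y$. From \eqref{eq:frobenius-bracket}, the $\partial_v$-component $\mathcal C$ must then vanish identically on the image. This contradicts \Cref{thm:finite-type-spherical}, which gives $\mathcal C<0$ wherever $R>0$ and $\rho>0$; here $\rho=e^{2u}>0$ holds automatically. Hence $\grad H\equiv0$ on all of $M$.

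With $H$ locally constant, \eqref{eq:beta-differential} gives $\grad\beta=0$, so $\beta$ is locally constant. Then \eqref{eq:beta-algebraic} makes $|A|^2$ locally constant, exactly as in \Cref{cor:CMC}. The principal curvatures are the roots of $t^2-2Ht+\tfrac12(4H^2-|A|^2)$. Their unordered pair is therefore locally constant, and by continuity of the ordered eigenvalues $k_1\le k_2$ the principal curvatures themselves are locally constant.

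I expect the main obstacle to be step (4), specifically justifying that the Frobenius necessary condition genuinely applies. One must check two things. First, the rescaling and the coordinate choice $z=\mathcal F$ are legitimate on a subchart where $\mathcal F'\ne0$; this subchart is nonempty because $\mathcal F$ is nonconstant and holomorphic. Second, the lifted surface is tangent to the distribution, so that $\mathcal C$ evaluated along it must vanish rather than merely being some unrelated expression. Given this, the strict sign from the theorem closes the argument. The analyticity step used to pass from local charts to connected $M$ is routine.
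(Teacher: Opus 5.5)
Your proposal is correct and follows the paper's proof: exclude $c\le0$ via the discriminant structure, pass to a regular chart of the holomorphic datum $\mathcal F=H+is$, lift to an integral surface of the prolonged rank-two distribution, and contradict the strict negativity of the Frobenius torsion $\mathcal C$; constancy of $H$, $\beta$, and $|A|^2$ then follows from the factorization equations. Your explicit check that the lift is tangent to the distribution, so that $\mathcal C$ must vanish along it, spells out what the paper leaves implicit, and your real-analyticity remark is harmless but unnecessary, since $\grad H\neq0$ at one point already gives an open set on which $H$ is nonconstant.
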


\begin{proof}
By \Cref{thm:disc-structure}, a non-CMC factorizing branch can occur only for $c>0$, where $s$ is real and $\mathcal F=H+is$ is holomorphic.  If $\mathcal F$ were nonconstant, its regular set would be nonempty.  On every connected component of that set the preceding reduction applies, while \Cref{thm:finite-type-spherical} forbids an integral surface.  Hence $\mathcal F$ is constant and $H$ is constant.  The factorization equations then make $s$ and $|A|^2$ constant.  Since the trace and squared norm of $A$ are constant, the unordered pair of principal curvatures is locally constant.
\end{proof}

\begin{corollary}[Scope of the local classification]\label{cor:factorization-isoparametric}
Let $M$ be connected.
\begin{enumerate}[label=\textup{(\alph*)},leftmargin=2.2em]
\item If $c\neq0$, then $\mathscr B_c$ admits a monic scalar--chiral factorization if and only if the surface is locally isoparametric.
\item If $c=0$, then $\mathscr B_0$ admits a mean-curvature-normalized factorization if and only if the surface is locally isoparametric.
\end{enumerate}
In both cases the normalized chiral masses are
\[
\beta=-\eta\pm\frac12\sqrt{9c-4|A|^2}.
\]
General Euclidean monic factors not satisfying $p=-H$ are precisely the exceptional family described in \Cref{prop:flat-exceptional}; no isoparametric classification of that larger family is asserted here.
Here and below, \emph{locally isoparametric} means that the two principal
curvatures are locally constant.
\end{corollary}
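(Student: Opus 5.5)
The corollary is assembled from results already proved, so the plan is to chain them in both directions for parts (a) and (b) and then record the value of $\beta$.

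\emph{Forward direction.} For (a), take any monic scalar--chiral factorization with $c\neq0$. \Cref{thm:general-factorization} forces $p=-H$, so by \Cref{cor:nonflat-exhaustive} the factorization is mean-curvature-normalized of the form \eqref{eq:factorization-ansatz}. For (b) this normalization is the hypothesis. In either case \Cref{thm:local-rigidity} applies on connected $M$ and gives $\grad H=0$ and $\grad|A|^2=0$. The trace $2H$ and the norm $|A|^2$ of $A$ are then constant, hence so are $\det A=2H^2-\tfrac12|A|^2$ and the unordered pair of principal curvatures. That is, the surface is locally isoparametric.

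\emph{Converse.} Suppose the principal curvatures are locally constant; since $M$ is connected, $H$ and $|A|^2$ are then globally constant. Set $\beta:=-\eta\pm\frac12\sqrt{9c-4|A|^2}$, taking any fixed complex square root. This constant solves the quadratic \eqref{eq:beta-quadratic}, using $4\eta^2=c$, so \eqref{eq:beta-algebraic} holds. Because $\beta$ and $H$ are constant, \eqref{eq:beta-differential} reduces to $0=0$. \Cref{thm:factorization} then gives the normalized identity \eqref{eq:factorization-ansatz}, which in particular is a monic scalar--chiral factorization; this is exactly \Cref{cor:CMC}. The point to stress is that $\beta$ may be non-real when $9c<4|A|^2$. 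This is permitted because \Cref{def:monic-factorization} allows complex coefficients and the coefficient comparison in \Cref{thm:general-factorization} is carried out on the complex spinor bundle.

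\emph{Displayed formula and remaining claim.} In the forward direction $\beta$ is constant by \eqref{eq:beta-differential}, since $\grad H=0$, and it satisfies \eqref{eq:beta-quadratic}; so it must be one of the two roots displayed. The statement about Euclidean factors with $p\neq-H$ needs nothing new: by \Cref{prop:flat-exceptional}, every Euclidean monic factorization has the form \eqref{eq:flat-general-factorization}, and those outside the normalized branch are exactly the remaining members of that family.

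\emph{Main obstacle.} No step is hard here: all the difficulty sits in \Cref{thm:local-rigidity}, and through it in the Frobenius sign of \Cref{thm:finite-type-spherical}. The only care needed is the passage from local constancy to global constancy of $H$ and $|A|^2$, which follows from connectedness of $M$ because a continuous function with zero gradient on a connected manifold is constant.
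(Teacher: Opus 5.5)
Your proposal is correct and follows the paper's own argument. The forward direction combines the forced normalization $p=-H$ from \Cref{cor:nonflat-exhaustive} (when $c\neq0$) with \Cref{thm:local-rigidity}, and the converse takes the constant root of the quadratic supplied by \Cref{cor:CMC}; your added details (connectedness to pass from local to global constancy, and complex $\beta$ being admissible on the complex spinor bundle) are consistent with the paper's two-line proof and make it slightly more explicit.
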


\begin{proof}
The normalized forward implication is \Cref{thm:local-rigidity}, and for $c\neq0$ every monic factor is normalized by \Cref{cor:nonflat-exhaustive}.  Conversely, constant principal curvatures imply constant $H$ and $|A|^2$, so \Cref{cor:CMC} supplies a constant normalized solution of the scalar Riccati system.
\end{proof}

\begin{remark}[Scope of the classification]
\Cref{cor:factorization-isoparametric} classifies immersions for which the geometric operator $\mathscr B_c$ admits the specified first-order factorization.  It does not claim that every BCI has a factorizing operator.  On a nonminimal region where $H\neq0$, an actual BCI conformal factor is selected instead by a positive constrained zero mode $U=\lambda^2H$, as discussed in \Cref{sec:examples}.  Minimal conformal immersions require separate interpretation because then $U\equiv0$ does not determine $\lambda$.
\end{remark}

\begin{remark}[Exact computer-assisted verification]\label{rem:computer-assisted}
Two exact computer-assisted steps enter \Cref{thm:finite-type-spherical}.
First, symbolic differential algebra expands
\eqref{eq:f-local}--\eqref{eq:torsion-def} and verifies the displayed formulas
for $\gamma_j$, $\alpha_j$, $J_0$, $Q$, its determinant, and its Schur
complement.  Second, rational Bernstein arithmetic certifies positivity of the
six explicitly displayed polynomials in \eqref{eq:N-amplitude-local} and
\eqref{eq:factor-signs-local}.  Neither step uses floating-point sampling.  The symbolic identities are independently reproduced by \path{derive_frobenius_torsion.py}, whose output is supplied as \path{frobenius_torsion_verification.txt}; the Bernstein sign certificates are reproduced by \path{verify_bernstein_certificates.py}, with complete output in \path{bernstein_certificate_full.txt}.
\end{remark}

\section{Model geometries, Ou's examples, and constrained kernels}\label{sec:examples}

The rigidity theorem classifies when the geometric operator factors, but the original BCI problem asks for more: a real zero mode satisfying the scalar-line condition $U=\lambda^2H$ and the positivity condition $\lambda^2>0$.  This section returns from operator algebra to that geometric reconstruction problem.  The cylinder, the proper small sphere, and the Euclidean round sphere show that factorization, kernel nontriviality, and positivity are genuinely different properties.

On a region where $H\neq0$, a conformal factor is recovered from a real solution $U$ by $\lambda^2=U/H>0$.  When $H\equiv0$, one has $U\equiv0$ for every conformal factor; the weighted field therefore does not recover $\lambda$, although the conformal immersion is harmonic and hence biharmonic.  The following nonminimal examples separate operator factorization from positive conformal-factor reconstruction.

\begin{definition}[Positive constrained kernel]\label{def:positive-kernel}
On a region where $H$ is nowhere zero, define the geometric positive constrained
kernel by
\begin{equation}\label{eq:positive-kernel}
\mathcal K_+(\phi)
:=\left\{U\in C^\infty(M,\R):
\begin{array}{l}
-\Delta U+(|A|^2-2c)U=0,\\
A(\grad U)+U\grad H=0,\\
U/H>0
\end{array}\right\}.
\end{equation}
On any chart carrying a restricted ambient Killing spinor, the first two
conditions are equivalent to $\mathscr B_c(U\psi)=0$ by
\Cref{thm:exact}.  This definition is therefore independent of whether the
ambient Killing spinor descends globally.  Elements of
$\mathcal K_+(\phi)$ are precisely the weighted mean-curvature fields of
biharmonic conformal reparametrizations of the fixed immersion, with
$\lambda^2=U/H$.
\end{definition}

\subsection{Ou's Euclidean cylinder family}\label{subsec:cylinder}

Let
\begin{equation}\label{eq:cylinder-param}
\phi(\theta,z)=(R\cos\theta,R\sin\theta,z),
\qquad R>0,
\end{equation}
with induced metric
\[
g=R^2\dd\theta^2+\dd z^2.
\]
Choose the unit normal so that, for
\[
e_1=R^{-1}\partial_\theta,
\qquad
e_2=\partial_z,
\]
one has
\begin{equation}\label{eq:cylinder-A}
Ae_1=R^{-1}e_1,
\qquad
Ae_2=0,
\qquad
H=\frac1{2R},
\qquad
|A|^2=\frac1{R^2}.
\end{equation}
For $c=0$, Ou's equations reduce to
\[
\Delta U-\frac1{R^2}U=0,
\qquad
A(\grad U)=0.
\]
The tangential equation gives $U_\theta=0$, and hence
\begin{equation}\label{eq:cylinder-U-ode}
U''(z)-R^{-2}U(z)=0.
\end{equation}
Since $H$ is constant and nonzero, $U=H\lambda^2$, so the full conformal family is
\begin{equation}\label{eq:cylinder-lambda}
\boxed{
\lambda^2(z)=a e^{z/R}+b e^{-z/R},
}
\end{equation}
on every connected domain where the right-hand side is positive.  On the
complete cylinder, where $z\in\R$, global positivity holds exactly when
$a,b\geq0$ and $(a,b)\neq(0,0)$.  The two one-sided choices reproduce the
opposite exponential conventions appearing in Ou's cylinder examples.

The Dirac discriminant is
\[
\Disc=-\frac4{R^2},
\]
so the two constant factorization fields are
\[
\beta=\pm\frac{i}{R}.
\]
Thus the cylinder factors only after complexification, even though its constrained scalar modes $\lambda^2$ are real and positive.  The nonconstant freedom occurs because
\[
\ker A=\operatorname{span}\{\partial_z\};
\]
the weighted field propagates along the zero-principal-curvature direction.

The induced metric $g$ is complete.  A globally positive nonconstant solution of \eqref{eq:cylinder-lambda} grows exponentially at one or both ends, so the source metric $\bar g=\lambda^{-2}g$ is incomplete at that end and $U\notin L^2(M,g)$.  This explains why the example evades the standard complete finite-energy rigidity hypotheses.

\subsection{The proper small sphere in \texorpdfstring{$\Sph^3(c)$}{S3(c)}}\label{subsec:small-sphere}

Let $c>0$ and consider
\[
\Sph^2\!\left(\frac1{\sqrt{2c}}\right)
\subset
\Sph^3\!\left(\frac1{\sqrt c}\right).
\]
With a suitable normal,
\begin{equation}\label{eq:small-sphere-general}
A=\sqrt c\,\Id,
\qquad
H=\sqrt c,
\qquad
|A|^2=2c.
\end{equation}
Ou's equations give $\Delta U=0$ and $A(\grad U)=0$, hence $U$ and $\lambda$ are constant.  The discriminant is $\Disc=c$, and with $\eta=\sqrt c/2$ the roots are
\[
\beta=0,
\qquad
\beta=-\sqrt c.
\]
Therefore
\begin{align}
\mathscr B_c
&=(D+H+\sqrt c\,\vol)(D-H),\label{eq:small-factor-general-1}\\
\mathscr B_c
&=(D+H)(D-H-\sqrt c\,\vol).\label{eq:small-factor-general-2}
\end{align}
The small sphere is therefore a real exact factorization point, but its positive constrained kernel contains only constant multiples of $H$.

\subsection{A negative control: the Euclidean round sphere}\label{subsec:euclidean-sphere}

For a Euclidean sphere of radius $R$,
\[
A=R^{-1}\Id,
\qquad
H=R^{-1},
\qquad
|A|^2=2R^{-2}.
\]
Its operator factors over $\C$, with $\beta=\pm i\sqrt2/R$.  Nevertheless, $A(\grad U)=0$ forces $U$ to be constant, while the normal equation gives $-2R^{-2}U=0$.  Hence $U=0$ and
\begin{equation}\label{eq:euclidean-sphere-empty}
\mathcal K_+(\phi)=\varnothing.
\end{equation}
This example shows that operator factorization is necessary for the mechanism studied here but is not sufficient for the existence of a positive conformal factor.

\subsection{Hyperbolic factors}\label{subsec:hyperbolic}

For $\Hyp^3(L)$,
\[
c=-L^{-2},
\qquad
\eta=\frac{i}{2L}.
\]
A CMC surface with constant $|A|^2$ has
\begin{equation}\label{eq:beta-hyperbolic-new}
\beta=-\frac{i}{2L}
\pm\frac{i}{2}\sqrt{4|A|^2+\frac9{L^2}}.
\end{equation}
The natural factorization is intrinsically complex.  Moreover, on an isoparametric nonminimal surface, a positive constrained solution cannot occur: if $A$ is invertible, Ou's tangential equation makes $U$ constant and the normal equation would require $|A|^2=2c<0$; if $A$ is singular, the argument in \Cref{prop:isoparametric-kernel} below forces $c=0$.

\begin{proposition}[Positive kernels on the isoparametric branch]\label{prop:isoparametric-kernel}
Let a connected surface in $N^3(c)$ have constant principal curvatures and $H\neq0$.  Suppose $U=\lambda^2H$ satisfies Ou's equations.
\begin{enumerate}[label=\textup{(\roman*)}]
\item If $A$ is invertible, then $U$ is constant and $|A|^2=2c$.  The nonminimal model is locally the proper small sphere in $\Sph^3(c)$.
\item If $U$ is nonconstant, then $A$ is singular, $c=0$, and the surface is locally a circular cylinder.  Its solutions are exactly \eqref{eq:cylinder-lambda}.
\end{enumerate}
\end{proposition}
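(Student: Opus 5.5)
The plan is to work in a local principal frame $(e_1,e_2)$ with constant principal curvatures $k_1,k_2$; since $H$ is constant, Ou's tangential equation \eqref{eq:Ou-tangent} reduces to $A(\grad U)=0$.

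\emph{Case (i).} If $A$ is invertible, then $A(\grad U)=0$ gives $\grad U=0$, so $U$ is a constant. Because $H\neq0$ and $\lambda^2>0$, this constant is nonzero. The normal equation \eqref{eq:Ou-normal} then gives $U(|A|^2-2c)=0$, hence $|A|^2=2c$, and in particular $c>0$. To identify the model I would combine this with the Gauss equation and with the known structure of isoparametric surfaces in space forms. In $\Sph^3(c)$, the isoparametric surfaces are totally umbilic spheres and Clifford tori; for the latter, $k_1k_2=-c$. Using $|A|^2=k_1^2+k_2^2=2c$ together with $k_1k_2=-c$ forces $(k_1+k_2)^2=0$, i.e.\ $H=0$, which is excluded. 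The surface is therefore umbilic with $k^2=c$, which is the proper small sphere of \S\ref{subsec:small-sphere}. For a self-contained argument, I would use Codazzi instead: constant $k_1\neq k_2$ implies the frame's connection form vanishes, so $K=0$ and hence $k_1k_2=-c$, which excludes $H\neq0$ exactly as above.

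\emph{Case (ii).} If $U$ is nonconstant, then $\ker A\neq0$ somewhere, so by part (i) $A$ is singular. With constant principal curvatures this means, say, $k_2=0$ and $k_1=2H\neq0$. The Codazzi argument again gives a flat metric, so Gauss yields $0=K=c+k_1k_2=c$, and therefore $c=0$. A Euclidean surface with principal curvatures $(k_1,0)$, $k_1$ a nonzero constant, is locally a circular cylinder of radius $R=1/k_1$. Then \S\ref{subsec:cylinder} already reduces Ou's system to $U_\theta=0$ and \eqref{eq:cylinder-U-ode}, which yields exactly the family \eqref{eq:cylinder-lambda}.

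The main obstacle is the Codazzi step showing that distinct constant principal curvatures force flatness. I would write $\nabla_{e_i}e_1=\omega(e_i)e_2$ and apply the Codazzi equation $(\nabla_{e_1}A)e_2=(\nabla_{e_2}A)e_1$. With $k_1,k_2$ constant this gives $(k_1-k_2)\omega(e_1)=0$ and $(k_1-k_2)\omega(e_2)=0$, so $\omega=0$ and $K=0$. The umbilic case $k_1=k_2$ must be handled separately: a singular umbilic $A$ is $0$, which contradicts $H\neq0$, so in case (ii) we automatically have $k_1\neq k_2$. In case (i), the umbilic subcase directly gives the small sphere after $|A|^2=2c$.
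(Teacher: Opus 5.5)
Your proposal is correct and follows essentially the same route as the paper: reduce the tangential equation to $A(\grad U)=0$, use Codazzi to show that distinct constant principal curvatures force $K=0$, and then apply the Gauss equation together with $|A|^2=2c$ in case (i), or with a zero eigenvalue in case (ii). Two details you make explicit are left implicit in the paper: that $U\neq0$ (so the normal equation really yields $|A|^2=2c$), and the Codazzi computation $(k_1-k_2)\omega=0$.
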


\begin{proof}
Since $H$ is constant, the tangential equation is $A(\grad U)=0$.  If $A$ is invertible, $U$ is constant, and the normal equation gives $|A|^2=2c$.  If the two constant principal curvatures were distinct, Codazzi would force the principal connection form to vanish, so $K=0$ and the Gauss equation would give $k_1k_2=-c$.  Together with $k_1^2+k_2^2=2c$, this implies $(k_1+k_2)^2=0$, contradicting $H\neq0$.  Thus the surface is umbilical, with $k_1=k_2=\pm\sqrt c$, which is the small sphere.

If $U$ is nonconstant, $A$ has a zero eigenvalue.  The principal curvatures are then distinct and constant, so again $K=0$.  The Gauss equation gives $c+k_1k_2=c=0$.  The nonminimal constant-principal-curvature model in $\R^3$ is the circular cylinder, and its scalar equation is \eqref{eq:cylinder-U-ode}.
\end{proof}

\begin{remark}[Reconstruction principle]\label{rem:reconstruction-principle}
The cylinder and small sphere exhibit two different mechanisms:
\[
\begin{array}{ll}
\text{kernel propagation:}&\ker A\neq0,\quad U\text{ varies along }\ker A,\\[1mm]
\text{curvature balance:}&A\text{ invertible},\quad |A|^2=2c,\quad U\text{ is constant}.
\end{array}
\]
The Euclidean sphere supplies a third possibility: the operator factors, but the positive constrained kernel is empty.
\end{remark}

\section{What the result accomplishes and what remains}\label{sec:outlook}

The paper separates five layers that are easy to conflate.
\begin{enumerate}[label=\textup{(\alph*)},leftmargin=2.2em]
\item \emph{Variational geometry.}  A conformal immersion is biharmonic precisely when its weighted mean curvature satisfies Ou's coupled normal--tangential system.
\item \emph{Spinorial encoding.}  Every such system is equivalent, locally, to the single constrained equation $\mathscr B_c(U\psi)=0$.
\item \emph{First-order structure.}  A doubled elliptic Dirac prolongation always exists, but an undoubled scalar--chiral factorization is an additional geometric property.
\item \emph{Rigidity of the scalar channel.}  In nonzero curvature, and in the normalized Euclidean branch, factorization is equivalent to local isoparametricity.  The only larger scalar family is the explicitly described non-normalized Euclidean exception.
\item \emph{Geometric reconstruction.}  Even a factorizing operator yields a BCI only when its scalar-line kernel contains a mode with $U/H>0$.
\end{enumerate}

This hierarchy is the main conceptual outcome.  It identifies exactly where the scalar--chiral method succeeds and exactly where it becomes too rigid.

\subsection{Potential uses of the Dirac bridge}
The operator $\mathscr B_c$ supplies a common object on which geometric and analytic methods can act.  Its first-order factors may be useful for comparing spectra, kernels, and indices on known biharmonic surfaces.  The discriminant system provides a compact rigidity test that can be sought in other weighted immersion equations.  More broadly, the coefficient-matching procedure gives a reusable strategy: assemble the normal and tangential Euler--Lagrange equations into Clifford components, classify the smallest factorization algebra, and read the surviving coefficient equations as geometric compatibility conditions.

For construction problems, the rigidity theorem is equally informative.  It proves that a scalar coefficient line cannot support a non-CMC real branch in nonzero curvature.  Any constructive continuation must therefore introduce genuinely new couplings---for example normal--tangential Clifford blocks, matrix-valued superpotentials, or higher-codimension normal data.  This is the starting point of Part II.

\subsection{A shorter intrinsic proof of local rigidity}
\Cref{thm:local-rigidity} closes the local spherical branch through an exact Frobenius obstruction and rational Bernstein certificates.  A shorter intrinsic derivation of the torsion sign would be valuable.  Possible routes include a Simons-type identity, a maximum principle in the holomorphic discriminant coordinate, or a direct inequality for the reduced Gauss--Codazzi system.

\subsection{Spectral and stability questions}
The two small-sphere factorizations suggest a first-order pairing of spectra.  Relating the kernels and indices of the factors to the second variation of the bienergy could connect the present construction with stability and Morse-index theory for biharmonic surfaces.  The cylinder also raises a natural noncompact question: which completeness, growth, or finite-energy hypotheses eliminate its exponentially growing positive modes?

\section{Conclusion}

The motivating problem was to understand whether Ou's two weighted surface equations---one normal and one tangential---are manifestations of a single first-order geometry.  The answer is now precise.  Restricting an ambient Killing spinor packages the equations into one Laplace-type operator $\mathscr B_c$.  This encoding is universal for biharmonic conformal surfaces in Riemannian three-space forms and preserves the crucial scalar-line field $U=\lambda^2H$.

The stronger factorization question has a rigid answer.  Complete coefficient matching in the monic scalar--chiral class produces a Cauchy--Riemann--Riccati system.  In nonzero curvature, the ambient Killing number forces the mean-curvature normalization.  The shifted chiral coefficient then becomes a harmonic conjugate of $H$, and its square is the geometric discriminant $\mathfrak D=9c-4|A|^2$.  Euclidean and hyperbolic non-CMC real branches are excluded immediately.  The remaining spherical branch reduces to a finite-type Gauss--Codazzi system whose Frobenius torsion is strictly negative.  Consequently, scalar--chiral factorization in nonzero curvature is locally equivalent to constant principal curvatures; the same is true for the normalized Euclidean branch, while the exceptional non-normalized Euclidean family is classified separately.

The model examples explain the geometric meaning of this theorem.  On the cylinder, positive modes propagate along $\ker A$; on the proper small sphere, ambient curvature balances the normal potential and only constant modes remain; on the Euclidean round sphere, the operator factors but the positive constrained kernel is empty.  Thus operator factorization, existence of a zero mode, and existence of a biharmonic conformal reparametrization are three distinct questions.

The main conclusion is therefore both a theorem and a design principle.  The scalar--chiral Dirac channel is mathematically complete, but it is too rigid to create new non-CMC examples in nonzero curvature.  Any broader constructive theory must leave that channel by introducing additional Clifford or normal--tangential degrees of freedom.  In this sense, Part I identifies the obstruction and the exact boundary; Part II begins with the mechanisms capable of crossing it.

\section*{Generative AI disclosure}

OpenAI ChatGPT (GPT-5.6) was used as an auxiliary tool for exploratory algebra, code drafting, literature organization, consistency checks, and language editing. The author conceived and directed the research, determined the mathematical arguments and conclusions, and independently reviewed all theorem statements, proofs, computations, citations, and interpretive claims. The author takes full responsibility for the content and accuracy of the manuscript.

\appendix

\section{Exact Bernstein certificates for the local torsion}\label{app:bernstein-certificate}

We record the finite polynomial certificates used in \Cref{thm:finite-type-spherical}.  The use of Bernstein coefficients to turn positivity on intervals or simplices into finite algebraic certificates is standard; see, for example, \cite{BoudaoudCarusoRoy,LeroyBernstein}.  Put
\begin{equation}\label{eq:xi-upsilon-simplex}
\xi=\frac{8X_0}{9},
\qquad
\upsilon=\frac{4Y_0}{9}.
\end{equation}
Then the closed discriminant region is the standard simplex
\[
\Sigma=\bigl\{(\xi,\upsilon):\ \xi\ge0,\ \upsilon\ge0,
\ \xi+\upsilon\le1\bigr\}.
\]
After the substitution
\[
X_0=\frac98\xi,
\qquad
Y_0=\frac94\upsilon,
\]
the six polynomials requiring certification are
\begin{equation}\label{eq:six-certificate-polynomials}
N,
\quad
N^2-32D(16X_0U^2+Y_0V^2),
\quad
P_C,
\quad
-F_2,
\quad
F_3,
\quad
-F_4,
\end{equation}
where all symbols are displayed explicitly in
\eqref{eq:U-local}--\eqref{eq:N-local},
\eqref{eq:PC-local}, and
\eqref{eq:F2-local}--\eqref{eq:F4-local}.

For a polynomial $P$ of total degree $n$, its triangular Bernstein expansion is
\begin{equation}\label{eq:triangular-Bernstein}
P(\xi,\upsilon)
=\sum_{i+j+k=n}b_{ijk}
\frac{n!}{i!j!k!}
\xi^i\upsilon^j(1-\xi-\upsilon)^k.
\end{equation}
If every $b_{ijk}$ is positive, then $P>0$ on $\Sigma$.  All coefficients below are exact rational numbers.

The amplitude polynomial
\[
\mathcal P_{\mathrm{amp}}
=N^2-32D(16X_0U^2+Y_0V^2)
\]
requires one localized midpoint refinement.  Label the vertices of $\Sigma$ by
\[
A=(0,0),
\qquad
B=(1,0),
\qquad
C=(0,1),
\]
and let $M_{AB},M_{AC},M_{BC}$ be their midpoints.  The four children are
\begin{align*}
0&=(A,M_{AB},M_{AC}),
&1&=(M_{AB},B,M_{BC}),\\
2&=(M_{AC},M_{BC},C),
&3&=(M_{AB},M_{BC},M_{AC}).
\end{align*}
Only child $0$ needs subdivision.  Applying the same labeling inside it yields the seven terminal cells
\[
1,
\quad2,
\quad3,
\quad00,
\quad01,
\quad02,
\quad03.
\]
The least coefficient on each terminal cell is recorded in
\Cref{tab:bernstein-amplitude}.

\begin{table}[ht]
\centering
\begin{tabular}{@{}cr@{}}
\toprule
Cell & least Bernstein coefficient\\
\midrule
$1$  & $397305729/32$\\
$2$  & $800442$\\
$3$  & $276933249/160$\\
$00$ & $8698768443/1280$\\
$01$ & $431300457/64$\\
$02$ & $153533961/64$\\
$03$ & $97875338067/20480$\\
\bottomrule
\end{tabular}
\caption{Exact terminal-cell certificate for $\mathcal P_{\mathrm{amp}}$.}
\label{tab:bernstein-amplitude}
\end{table}

The remaining five polynomials are positive on the original simplex without subdivision.
\begin{table}[ht]
\centering
\begin{tabular}{@{}lrrr@{}}
\toprule
Polynomial & degree & coefficient count & least coefficient\\
\midrule
$N$      & $3$ & $10$ & $108$\\
$P_C$    & $3$ & $10$ & $5832$\\
$-F_2$   & $2$ & $6$  & $810$\\
$F_3$    & $3$ & $10$ & $5832$\\
$-F_4$   & $4$ & $15$ & $104976$\\
\bottomrule
\end{tabular}
\caption{Exact root-simplex Bernstein certificates.}
\label{tab:bernstein-root}
\end{table}

The ancillary script \path{verify_bernstein_certificates.py} performs the following operations over $\mathbb Q$:
\begin{enumerate}[label=\textup{(\roman*)},leftmargin=2.2em]
\item defines the six polynomials in \eqref{eq:six-certificate-polynomials};
\item carries out the normalization \eqref{eq:xi-upsilon-simplex};
\item converts the power basis to the degree-preserving triangular Bernstein basis \eqref{eq:triangular-Bernstein};
\item applies the stated affine midpoint subdivisions to $\mathcal P_{\mathrm{amp}}$;
\item asserts strict positivity and prints every coefficient.
\end{enumerate}
The complete machine-readable output is supplied as
\path{bernstein_certificate_full.txt}.  Thus the sign portion of the proof can be checked independently without reconstructing the differential-geometric reduction.

As a direct consistency check, at the center $x=y=0$ the torsion reduces to
\begin{equation}\label{eq:center-torsion-check}
\mathcal C
=-\frac{\sqrt2}{6912}
\left(32256v^2+81\rho^2+25632\rho+11008\right)<0,\qquad \rho>0.
\end{equation}

\end{document}